\newcommand{\vG}{\varGamma}
\newcommand{\ve}{\varepsilon}
\newcommand{\N}{\mathbb{N}}
\newcommand{\R}{\mathbb{R}}
\newcommand{\mcL}{{\mathcal L}}
\newcommand{\mcS}{{\mathcal S}}
\newcommand{\mcI}{{\mathcal I}}
\newcommand{\mcP}{{\mathcal P}}
\newcommand{\mcH}{\mathcal H}
\newcommand{\emp}{\emptyset}
\newcommand{\bs}{\boldsymbol}
\newcommand{\vPh}{\varPhi}
\newcommand{\mcM}{\mathcal M}
\newtheorem{thm}{Theorem}[section]
\numberwithin{thm}{section}
\newtheorem{deff}[thm]{Definition}
\newtheorem{lem}[thm]{Lemma}
\newtheorem{rem}[thm]{Remark}
\newtheorem{prop}[thm]{Proposition}
\newtheorem{cor}[thm]{Corollary}
\begin{document}
\title[Relations among Gauge and Pettis integrals for ...]{ Relations among Gauge and Pettis integrals for $cwk(X)$-valued multifunctions\\
\vskip1cm \small
D. Candeloro,  L. Di Piazza,  K. Musia{\l},    
A.R. Sambucini
}
\author[]{D. Candeloro;  L. Di Piazza;  K. Musia{\l};    
A.R. Sambucini}
\thanks{This research was partially supported by Grant  Prot. N.  U2016/0000807 of GNAMPA -- INDAM (Italy), by University of Perugia --
Dept. of Mathematics and Computer Sciences and by University of Palermo.
These results were obtained during the visit of   the third
author to the Dept. of Mathematics and Computer Sciences of the  University  of Perugia (Italy) as a visiting professor.
}

\keywords{Multifunction,  gauge integral, decomposition theorem for multifunction,  Pettis integral, selection}
\subjclass[2010]{28B20,  26E25, 26A39, 28B05,
 46G10,  54C60,  54C65}

\begin{abstract}
The aim of this paper is to study   relationships
among ``gauge integrals''  (Henstock,  Mc Shane, Birkhoff)
and Pettis integral of multifunctions whose values are  weakly compact and convex subsets  of a general
Banach space, not necessarily separable. For this purpose we prove the existence of variationally Henstock integrable selections for variationally Henstock integrable multifunctions. Using this and other known results concerning the existence of selections integrable in the same sense as the corresponding multifunctions, we  obtain three decomposition
theorems (Theorem \ref{t4},  Theorem \ref{t4a} and Theorem \ref{decofinal}).  As  applications of such  decompositions,  we deduce characterizations of Henstock (Theorem \ref{t3}) and $\mcH$
(Theorem \ref{t3a}) integrable multifunctions, together with an
extension of  a well-known theorem of Fremlin  \cite[Theorem 8]{f1994a}.
\end{abstract}
 \maketitle

\section{Introduction}
A large amount of work about measurable and integrable multifunctions was done in the
last decades.  Some pioneering and highly influential ideas and notions around the matter
were inspired by problems arising in Control Theory and Mathematical Economics. But
the topic is  interesting also from the point of view of measure and integration theory, as showed in
the papers \cite{BCS,CASCALES2,ckr1,dm,dm2,dpp,mu8,m1,m2,mu4,BS2011,sbk,cs2014,cs2015,m,mp}.
 In particular, comparison of different generalizations of  Lebesgue integral is, in our opinion, one of the
milestones of the modern theory of integration. Inspired by  \cite{Fvol4, dm,ncm,ckr1,ckr2009, cdpms2016,nara,cdpms2016a}, we continue in this paper
 the study on this subject and we examine  relationship
among ``gauge integrals''  (Henstock,  Mc Shane, Birkhoff)
and Pettis integral of multifunctions whose values are  weakly compact and convex subsets  of a general
Banach space, not necessarily separable.

 The name  ``gauge integrals'' refers to integrals
 defined through  partitions controlled by a positive function, traditionally named  gauge. J.Kurzweil in 1957,  and then  R. Henstock in 1963, were the first who introduced a  definition of a gauge integral  for real valued functions, called now the  Henstock--Kurzweil integral. Its generalization to vector valued functions or to multivalued functions is called in the literature  the Henstock integral. In the family of the  gauge integrals there is also the  McShane integral and   the versions of the Henstock and the McShane integrals when  only measurable gauges are allowed ($\mathcal{H}$ and $\mathcal{M}$ integrals, respectively), and the variational Henstock and the variational McShane integrals.
Moreover  according to \cite{sol} and
\cite[Remark 1]{nara},  the Birkhoff integral is a gauge integral too and it turns out to be  equivalent to the $\mathcal{M}$ integral.

The main results of the paper  are
the existence of variationally Henstock integrable selections (Theorem \ref{T4.1}), which solves
the problem of the existence of  variationally Henstock integrable
selection for a
$cwk(X)$-valued variationally Henstock integrable multifunction
( \cite[Question 3.11]{cdpms2016}) and
 three decomposition
theorems (Theorem \ref{t4},  Theorem \ref{t4a} and Theorem \ref{decofinal}). The first one says that each  Henstock integrable multifunction
is the sum of a McShane  integrable multifunction and a Henstock integrable
function. The second one describes each $\mcH$-integrable multifunction
as the sum of a Birkhoff  integrable multifunction and  an $\mathcal{H}$-integrable
function, and the third one proves that each variationally Henstock integrable multifunction is the sum of a variationally Henstock integrable selection of the multifunction  and a   Birkhoff integrable multifunction  that is also  variationally Henstock integrable.
 As  applications of such  decomposition results,  characterizations of Henstock (Theorem \ref{t3}) and $\mcH$
(Theorem \ref{t3a}) integrable multifunctions are presented as extensions of the result given by
 Fremlin, in the remarkable paper \cite[Theorem 8]{f1994a}, and of more recent results given in
 \cite{dm,cdpms2016}.
 Finally we want to point out  that  in order to obtain the decomposition theorems and also the extension of the Fremlin result is not enough simply  to apply the embedding  theorem of R{\aa}dstr\"{o}m, but more sophisticated techniques are required.
\section{Preliminary facts}\label{two}
Let $[0,1] \subset \mathbb{R}$ be endowed
with the usual topology and  Lebesgue measure $\lambda$.
 The family of all Lebesgue measurable subsets of $[0,1]$ is denoted by $\mathcal{L}$, while
 $\mcI$
is the collection of all closed subintervals of $[0,1]$. If $I\in  \mcI$
then its Lebesgue  measure will be denoted by $|I|$.
\\
 A  finite  partition ${\mathcal P}$ {\it in} $[0,1]$ is a  collection $\{(I{}_1,t{}_1),$ $ \dots,(I{}_m,t{}_m) \}$,
  where $I{}_1,\dots,I{}_m$ are nonoverlapping  (i.e. the intersection  of two intervals is  at most
a singleton) closed
 subintervals of $[0,1]$, $t{}_i$ is a point of $[0,1]$, $i=1,\dots, m$.\
 If $\cup^m_{i=1} I{}_i=[0,1]$, then  ${\mathcal P}$ is {\it a partition of} $[0,1]$.\\
 If   $t_i \in I{}_i$, $i=1,\dots,m$,  we say that $\mcP$ is a {\it Perron partition of}
  $[0,1]$.\\
 A countable  partition $(A_n)_n$  of $[0,1]$ in $\mathcal{L}$ is a collection of pairwise disjoint $\mathcal{L}$-measurable sets such that $\cup_n A_n = [0,1]$; we admit empty sets.\\
 A {\it gauge} on $[0,1]$ is any
strictly  positive map on $[0,1]$.
Given a gauge $\delta$
  we say that a partition $\{(I{}_1,t{}_1), \dots,(I{}_m,t{}_m)\}$ is $\delta$-{\it fine} if
  $I{}_i\subset(t{}_i-\delta(t{}_i),t{}_i+\delta(t{}_i))$, $i=1,\dots,m$.
$\Pi_{\delta}$ and $\Pi_{\delta}^P$ are  the families of $\delta$-fine partitions, and $\delta$-fine Perron partitions of $[0,1]$, respectively.
\\
$X$ is an arbitrary  Banach space with its dual $X{}^*$.
  The closed unit ball of $X{}^*$ is denoted by
  $B_{X{}^*}$.
As usual $cwk(X)$ denotes the family of all non-empty  convex weakly compact subsets of $X$;
on this hyperspace the usual
   Minkowski addition  and the multiplication by positive scalars are considered, together with    the  Hausdorff distance
$d{}_H$. Moreover, $\|A\|:=\sup\{\|x\|\colon x\in {A}\}$. 
The {\it support
  function } $s: X^* \times cwk(X) \to \mathbb{R}$ is defined by $s(x{}^*, C)
:= \sup \{ \langle x{}^*,x \rangle : \ x
  \in  C\}$.

\begin{deff} \rm
  A map $\vG:[0,1]\to cwk(X)$ is called a {\it multifunction}.
$\vG$ is {\it simple} if  there exists a finite collection $\{A_1,..., A_p\}$ of measurable pairwise disjoint subsets of $[0,1]$  such that $\vG$ is constant on each $A_j$.\\
A map $\vG:\mcI\to cwk(X)$ is called an {\it interval multifunction}.
 A multifunction $\vG:[0,1]\to cwk(X)$ is said to be {\it scalarly measurable} if for every $ x{}^* \in  X{}^*$, the map
  $s(x{}^*,\vG(\cdot))$ is measurable.\\
 $\vG$ is said to be {\it Bochner measurable}  if there exists a sequence of simple multifunctions $\vG_n: [0,1] \to cwk(X)$ such that
$\lim_{n\rightarrow \infty}d_H(\vG_n(t),\vG(t))=0$ for almost all $t \in [0,1]$.\\
It is well known that Bochner
measurability of a $cwk(X)$-valued multifunction yields its  scalar
measurability. The reverse implication in general fails,  even if $X$ is separable (see \cite[ p.
295  and Example 3.8]{cdpms2016} ).
 \\

 If a multifunction is a function, then we use the traditional name of strong measurability instead of Bochner measurability.\\

  A function $f:[0,1]\to X$ is called a {\it selection of} $\vG$ if $f(t) \in \vG(t)$,   for every $t\in  [0,1]$.
\end{deff}
    \begin{deff} \rm
    A multifunction $\vG:[0,1]\to cwk(X)$ is said to be {\it Birkhoff
   integrable} on $[0,1]$,
   if there exists a set
$\vPh{}_{\vG}([0,1]) \in cwk(X)$
 with the following property: for every $\varepsilon > 0$  there is a countable
partition $\mcP{}_0$  of $[0,1]$  in $\mathcal{L}$ such that for every countable partition $\mcP = (A{}_n){}_n$
 of $[0,1]$  in $\mathcal{L}$
finer than $\mcP_0$ and any choice
 $T = \{t_n: t_n \in  A_n\,,n\in\N\}$, the series
$\sum_n\lambda(A{}_n) \vG(t{}_n)$
 is unconditionally
convergent (in the sense of the Hausdorff metric) and
\begin{eqnarray}\label{e14-a}
d{}_H \biggl(\vPh{}_{\vG}([0,1]),\sum_n \vG(t{}_n) \lambda(A{}_n)\biggr)<\ve\,.
\end{eqnarray}
(see for example  \cite[Proposition 2.6]{CASCALES2}).
\end{deff}

    \begin{deff} \rm
 A multifunction $\vG:[0,1]\to cwk(X)$ is said to be {\it Henstock} (resp. {\it McShane})
  {\it  integrable} on $[0,1]$,  if there exists   $\vPh{}_{\vG}([0,1]) \in cwk(X)$
    with the property that for every $\varepsilon > 0$ there exists a gauge $\delta$ on $[0,1]$
such that for each $\{(I{}_1,t{}_1), \dots,(I{}_p,t{}_p)\} \in \Pi_{\delta}^P$
(resp. $\in \Pi_{\delta}$)
we have
\begin{eqnarray}\label{e14}
d{}_H \biggl(\vPh{}_{\vG}([0,1]),\sum_{i=1}^p\vG(t{}_i)|I{}_i|\biggr)<\ve\,.
\end{eqnarray}
$\vG$ is said to be {\it Henstock} (resp. {McShane})
  {\it  integrable} on  $I \in \mathcal{I}$ ($E\in \mcL$) if
$\vG 1_I$
($\vG 1{}_E$) is  integrable on $[0,1]$
in the corresponding sense.
\end{deff}

  In case the multifunction is a single valued function and $X$ is the real line, the corresponding integral is called  {\it  Henstock--Kurzweil integral} (or  HK-{\it  integral}) and it is denoted by the symbol $(HK)\int_I$\;.

\begin{rem}\label{HH} \rm
If the gauges above considered are taken to be measurable, then we
speak of $\mathcal H$ (resp. $\mathcal M$)-integrability on $[0,1]$.

Given $\vG:[0,1]\to cwk(X)$, it is known  that the property of integrability is inherited on every $I\in \mcI$  if $\vG$ is
Henstock ($\mathcal H$)  integrable on $[0,1]$, while  the same is true for every
$E\in\mcL$ when $\vG$ is McShane ($\mathcal M$) integrable on $[0,1]$
 (see e.g. \cite{dm}).

 As pointed out before,
in case of single valued functions,
according to \cite{sol} and
\cite[Remark 1]{nara}, $\mathcal M$-integrability is equivalent to the Birkhoff integrability.
\end{rem}

\begin {deff} \rm
 A multifunction $\vG : [0; 1] \to cwk(X)$ is said to be {\it Henstock-Kurzweil-Pettis
integrable} (or HKP-integrable) on $[0,1]$ if for every $x^{*} \in X^{*}$ the map $s(x{}^*,\vG(\cdot))$
is HK-integrable and for each $I \in \mathcal{I}$ there exists a set $W_I \in cwk(X)$ such that
$s(x^*,W_I)  = (HK) \int_I s(x{}^*,\vG)$, for every $x^{*} \in X^{*}$. The set $W_I$ is called the Henstock-Kurzweil-Pettis integral of $\vG$ over $I$ and we set $W_I := (HKP)  \int_I \vG $.
\end{deff}

In the previous definition,
if  HK-integral is replaced by Lebesgue integral and intervals by Lebesgue measurable sets, then we get the definition of the Pettis integral.

 For more detailed properties of
the integrals involved  and for all that is unexplained in this paper
we refer   to \cite{mu,mu3,mu4,mu8,ckr1,dm,dm2, Gor}.

\begin{deff} \rm An interval multifunction  $\Phi:{\mathcal I} \rightarrow cwk(X)$ is said to be   \textit{finitely additive}, if $\Phi(I{}_1 \cup I{}_2)=\Phi(I{}_1) + \Phi(I{}_2)$ for every non-overlapping intervals $I{}_1, I{}_2 \in \mathcal{I}$ such that $I{}_1 \cup I{}_2 \in {\mathcal I}$.
In this case $\Phi$ is said to be an {\em interval multimeasure}.
\\
A map $M:\mathcal{L} \rightarrow cwk(X)$ is said to be a {\em  multimeasure} if for every $x{}^*\in X{}^*$, the map
$\mathcal{L}  \ni A\mapsto s(x{}^*,M(A))$ is a real valued measure
(cf. \cite[Theorem 8.4.10]{hp}).
\\
$M:\mathcal{L} \rightarrow cwk(X)$ is said to be a $d_H$\textit{-multimeasure} if for every sequence $(A_n)_{n\geq 1}$
in
$\mathcal{L}$ of pairwise disjoint sets with $A=\bigcup_{n\geq 1}A_n$, we have
$$
d_H\biggl(M(A),\sum_{k=1}^n M(A_k)\biggr)\rightarrow 0\, \quad
\mbox{ as }
n\rightarrow +\infty.$$
\\
A multimeasure $M:\mathcal{L} \rightarrow cwk(X)$ is  said to be  $\lambda$\textit{-continuous} and we write $M\ll\lambda$, if $M(A)=\{0\}$
for every $A \in \mathcal{L}$ such that
$\lambda(A)=0$.
\end{deff}
\begin{rem} \rm
 It is well known that
$M$ is a $d{}_H$-multimeasure if and only if it is a multimeasure (cf. \cite[Theorem 8.4.10]{hp}).
 Observe moreover that this is a multivalued analogue of Orlicz-Pettis Theorem. It is also known that the indefinite integrals of Henstock or $\mathcal{H}$ integrable multifunctions are interval multimeasures,
while   the indefinite integrals of Pettis (hence also McShane  or Birkhoff) integrable multifunctions are multimeasures.\\

\end{rem}

\begin{deff} \rm
   A multifunction $\vG:[0,1]\to cwk(X)$ is said to be {\it variationally Henstock} ({\it
McShane})
   integrable,
   if there exists an interval multimeasure  $\vPh_{\vG}: {\mathcal I} \to {cwk(X)}$ with the following property:
   for every $\ve>0$ there exists a gauge $\delta$
   on $[0,1]$ such that for each
   $\{(I_1,t_1), \dots,(I_p,t_p)\}\in\Pi_{\delta}^P$ (resp. $ \Pi_{\delta}$)
we have
\begin{eqnarray}\label{aa}
\sum_{j=1}^pd_H \left(\vPh_{\vG}(I_j),\vG(t_j)|I_j|\right)<\ve\,.
\end{eqnarray}
   We write then $$(vH)\int_0^1\vG\,dt:=\vPh_{\vG}([0,1])\qquad ((vMS)\int_0^1\vG\,dt:=\vPh_{\vG}([0,1])).$$
 The set  multifunction  $\vPh_{\vG}$  will be  called the {\it variational Henstock} ({\it McShane}) {\it primitive} of $\vG$.
\\
The variational integrals on a set $I \in \mathcal{I}$ can be defined in an analogous way and they are uniquely determined.
It has been proven in \cite[Proposition 2.8]{cdpms2016} that each variationally Henstock integrable multifunction $\vG:[0,1]\to cwk(X)$ is Bochner measurable.
   \end{deff}

Important tools for the study of multifunctions are
embeddings and variational measures.
 Let $l_{\infty}(B_{X^*})$ be the Banach space of bounded real valued functions defined on $B_{X^*}$ endowed with the supremum norm $|| \cdot||_{\infty}$.
The  R{\aa}dstr\"{o}m embedding  $i:cwk(X) \to l{}_{\infty}(B_{X^*})$, given in \cite{l1,cdpms2016} by the relation $cwk(X)\ni{W}\longrightarrow s(\cdot,W)$,
allows to consider G-integrable
multifunctions $\vG:[0,1]\to{cwk(X)}$  as G-integrable functions
 $ i \circ \vG:[0,1] \to l{}_{\infty}(B_{X^*})$.
Thanks to the embedding, a multifunction $\vG$ is G-integrable if and only if its
 image  $i\circ{G}$ in $l{}_{\infty}(B_{X^*})$ is G-integrable (G stands for any of the gauge integrals).

For what concerns the variational measure we recall that
\begin{deff}\label{vma} \rm
The \textit{variational measure $V_\Phi: \mathcal{L} \rightarrow \mathbb{R}$ } generated by an interval multimeasure $\Phi :\mathcal{I} \rightarrow cwk(X)$
is defined by
	$$V_\Phi(E):=\inf_{\delta}\left\{Var(\Phi,\delta,E):\delta\ \text{is a gauge on }E\right\},$$
where
\[Var(\Phi, \delta,E)=\sup
\left\{
\sum_{j=1}^p\|\Phi(I_j)\|\colon
\{(I_j,t_j)\}_{j=1}^p\in\Pi_{\delta}^P\;\mbox{ and } t_j \in E, j=1,\dots,p.
\right\}\]
\end{deff}

For other properties we refer to \cite{BDpM2,dpp,cdpms2016}.\\

We also  remember that for a Pettis integrable mapping $G:[0,1]\to cwk(X)$, its integral $J_G$ is a  multimeasure on the  $\sigma$-algebra $\mathcal{L}$ (cf. \cite[Theorem 4.1]{ckr2009}) that is $\lambda$-continuous. As also observed in
\cite[section 3]{ckr2009}, this means that the {\em embedded} measure $i(J_G)$ is a countably additive measure with values in  $l_{\infty}(B_{X^*})$.\\

We recall that
\begin{deff} \rm \cite[Definition 2]{nara}
A function $f:[0,1]\to{X}$ is said to be {\it Riemann measurable} on $[0,1]$ if for every $\varepsilon > 0$
there exist an $\eta> 0$ and a
closed set $F \subset [0,1]$
with $\lambda([0,1] \setminus F )<\varepsilon$
such that
$ \left\| \, \sum_{i=1}^p \left\{f(t{}_i) - f(t'_i)\right\} |I{}_i|\, \right\| <\ve\,$
whenever $\{I{}_i\}$ is a finite collection of pairwise nonoverlapping intervals with
\mbox{$\max_{1\leq i\leq p} |I{}_i| < \eta$} and $t{}_i, \ t'_i \in I{}_i \bigcap F$.
\end{deff}
According to \cite[Theorem 4]{nara} each $\mathcal H$-integrable function is Riemann measurable on $[0,1]$.
Moreover in \cite[Theorem 9]{ncm} it
was proved
 that a function $f: [0,1]\to X$ is $\mathcal M$-integrable if and only $f$ is both Riemann measurable and Pettis integrable.
So we get the following characterization, that is parallel to Fremlin's description \cite{f1994a}:
\begin{thm}\label{t11}
A function $f:[0,1]\to X$ is Birkhoff integrable if and only if it is $\mcH$-integrable and Pettis integrable.
\end{thm}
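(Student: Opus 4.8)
The plan is to obtain this statement as a direct combination of the facts recalled just above, together with two standard implications between the gauge integrals. First I would treat the easy direction: if $f$ is Birkhoff integrable, then by Remark \ref{HH} (equivalence of $\mathcal M$-integrability and Birkhoff integrability for single-valued functions) $f$ is $\mathcal M$-integrable. Since every $\delta$-fine Perron partition is in particular a $\delta$-fine (McShane) partition, i.e. $\Pi_\delta^P\subset\Pi_\delta$, the defining inequality for $\mathcal M$-integrability over all $\delta$-fine partitions, with a measurable gauge $\delta$, yields at once the $\mcH$-integrability inequality over all $\delta$-fine Perron partitions with the very same gauge; hence $f$ is $\mcH$-integrable, with the same integral. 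That $\mathcal M$-integrability implies Pettis integrability is classical: the $\mathcal M$- (equivalently Birkhoff) primitive is a countably additive $X$-valued measure on $\mathcal L$ whose scalar components coincide with the indefinite Lebesgue integrals of $\langle x^*,f\rangle$, which is exactly the Pettis property. So $f$ is also Pettis integrable.

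For the converse, assume $f$ is both $\mcH$-integrable and Pettis integrable. By \cite[Theorem 4]{nara}, $\mcH$-integrability forces $f$ to be Riemann measurable on $[0,1]$. Thus $f$ is simultaneously Riemann measurable and Pettis integrable, and \cite[Theorem 9]{ncm} then gives that $f$ is $\mathcal M$-integrable. Invoking once more the equivalence between $\mathcal M$-integrability and Birkhoff integrability (Remark \ref{HH}), we conclude that $f$ is Birkhoff integrable, which completes the proof.

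The argument is essentially a bookkeeping assembly of cited results, so there is no genuine obstacle here; the only points that need a word of care are the two ``known'' implications used in the forward direction, namely $\mathcal M\Rightarrow\mcH$ (immediate from $\Pi_\delta^P\subseteq\Pi_\delta$) and $\mathcal M\Rightarrow{}$Pettis, and the observation that measurability of the gauge is preserved when passing from the McShane condition to the Henstock one, which holds trivially since the same gauge is used.
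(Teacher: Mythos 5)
Your proof is correct. The forward direction is the one the paper dismisses as trivial, and your spelled-out version (Birkhoff $=\mathcal M$ by Remark \ref{HH}, then $\Pi_\delta^P\subseteq\Pi_\delta$ with the same measurable gauge gives $\mcH$-integrability, and $\mathcal M\Rightarrow$ McShane $\Rightarrow$ Pettis gives Pettis integrability) is sound. In the converse you take a slightly different, and in fact shorter, citation path than the paper: after deducing Riemann measurability from \cite[Theorem 4]{nara}, you apply \cite[Theorem 9]{ncm} directly (Riemann measurable together with Pettis integrable implies $\mathcal M$-integrable), whereas the paper first invokes Fremlin's \cite[Theorem 8]{f1994a} to upgrade Henstock plus Pettis to McShane integrability and only then uses \cite[Theorem 7]{nara} (Riemann measurable plus McShane implies $\mathcal M$). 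Both routes are legitimate; yours is the one actually suggested by the sentence immediately preceding the theorem in the paper, and it avoids the detour through Fremlin's theorem, while the paper's version makes the dependence on the Henstock-to-McShane step explicit.
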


\begin{proof}
The  only if part is trivial. For the converse observe that $\mcH$-integrability implies Riemann measurability by \cite[Theorem 4]{nara}.
Moreover by \cite[Theorem 8]{f1994a} $f$ is Mc Shane integrable and
 Riemann measurability together with Mc Shane integrability
 imply $\mathcal{M}$-integrability by \cite[Theorem 7]{nara}. \qed
\end{proof}

We denote by $\mcS{}_P(\vG), \mcS{}_{MS}(\vG), \mcS{}_{\mcH}(\vG),
\mcS{}_{H}(\vG),
\mcS{}_{Bi}(\vG)=\mcS{}_{\mathcal{M}}(\vG)\ $ and
$ \mcS{}_{vH}(\vG)$,  the collections of all selections of $\vG: [0,1] \to cwk(X)$,
 which are respectively Pettis, McShane, $\mcH$, Henstock, Birkhoff  and variationally Henstock integrable.
\section{Henstock and McShane integrability of $cwk(X)$-valued multifunctions}\label{five}
\begin{prop}\label{prop1}
Let $\vG : [0, 1] \to cwk(X)$ be  such that $\vG(\cdot) \ni 0$ a.e. If
$\vG$ is  Henstock integrable  (resp. $\mcH$-integrable) on $[0,1]$,
then  it is also McShane (resp. Birkhoff, i.e. $\mathcal{M}$) integrable on $[0,1]$.
\end{prop}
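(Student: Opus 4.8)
The plan is to reduce everything, by means of the R{\aa}dstr\"{o}m embedding, to the scalar support functions $s(x^{*},\vG(\cdot))$, then to use that a nonnegative Henstock--Kurzweil integrable real function is automatically Lebesgue integrable, and finally to invoke the single-valued characterizations of McShane and Birkhoff integrability (Fremlin's \cite[Theorem 8]{f1994a} and Theorem \ref{t11}).

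Since $\vG$ is Henstock (resp.\ $\mcH$) integrable, the embedded function $i\circ\vG:[0,1]\to l_{\infty}(B_{X^{*}})$ is Henstock (resp.\ $\mcH$) integrable; composing with the evaluation functionals at the points $x^{*}\in B_{X^{*}}$ one gets that each $s(x^{*},\vG(\cdot))$ is $HK$-integrable, with $(HK)\int_{I}s(x^{*},\vG)=s(x^{*},\vPh_{\vG}(I))$ for $I\in\mcI$, and that $\vG$ is scalarly measurable. As $0\in\vG(t)$ off a fixed $\lambda$-null set, $s(x^{*},\vG(t))\ge0$ there for every $x^{*}$ simultaneously; hence each $s(x^{*},\vG(\cdot))$ is a nonnegative $HK$-integrable function, and so Lebesgue integrable. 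Thus $s(x^{*},\vG(\cdot))\in L^{1}[0,1]$ for every $x^{*}\in X^{*}$, and, the integrand being nonnegative, $\int_{E}s(x^{*},\vG)\,d\lambda\le\int_{0}^{1}s(x^{*},\vG)\,d\lambda=s(x^{*},\vPh_{\vG}([0,1]))$ for every $E\in\mcL$.

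Next I would build the integral of $\vG$ over an arbitrary $E\in\mcL$. The function $p_{E}(x^{*}):=\int_{E}s(x^{*},\vG)\,d\lambda$ is sublinear and $w^{*}$-lower semicontinuous (being an integral of functions with these properties), nonnegative, and dominated by $s(\cdot,\vPh_{\vG}([0,1]))$. Consequently the norm-closed convex set $M(E):=\{x\in X:\langle x^{*},x\rangle\le p_{E}(x^{*})\ \text{for all }x^{*}\in X^{*}\}$ contains $0$ and is contained in $\vPh_{\vG}([0,1])$, so it is weakly compact, i.e.\ $M(E)\in cwk(X)$; and by the H\"ormander-type representation of $w^{*}$-lower semicontinuous sublinear functions one has $p_{E}=s(\cdot,M(E))$. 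Hence $\vG$ is scalarly integrable with $s(x^{*},M(E))=\int_{E}s(x^{*},\vG)\,d\lambda$ for every $x^{*}$ and every $E\in\mcL$ and $M(I)=\vPh_{\vG}(I)$ on intervals; equivalently (cf.\ \cite{CASCALES2,ckr2009}), $\vG$ is Pettis integrable and $M$ is a $\lambda$-continuous multimeasure extending $\vPh_{\vG}$, and $i\circ\vG$ is a Pettis integrable single-valued function whose indefinite integral $i\circ M$ takes values in $i(cwk(X))$.

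Finally, $i\circ\vG$ is both Pettis integrable and --- by hypothesis --- Henstock (resp.\ $\mcH$) integrable, so by Fremlin's \cite[Theorem 8]{f1994a} (resp.\ Theorem \ref{t11}) it is McShane (resp.\ Birkhoff) integrable; its McShane (resp.\ Birkhoff) primitive coincides with $i\circ M$, hence has values in $i(cwk(X))$, and therefore $\vG$ itself is McShane (resp.\ Birkhoff) integrable. The place where the hypothesis $0\in\vG(t)$ a.e.\ is genuinely used --- and, I expect, the main obstacle --- is precisely the chain: nonnegative support functions, hence Lebesgue integrable support functions, hence the domination $p_{E}\le s(\cdot,\vPh_{\vG}([0,1]))$; this is what keeps the Dunford integral $M(E)$ weakly compact. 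A further subtlety is that McShane (resp.\ Birkhoff) integrability is strictly stronger than Pettis integrability, so the final step must re-invoke the Henstock (resp.\ $\mcH$) hypothesis through the single-valued theorems.
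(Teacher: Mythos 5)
Your overall strategy (embed via R{\aa}dstr\"{o}m, use $0\in\vG(t)$ a.e.\ to make all support functions nonnegative and hence Lebesgue integrable, deduce Pettis integrability, then invoke the single-valued characterizations) runs parallel to the paper's proof up to the last step, where there is a genuine gap. From your construction of $M(E)$ you obtain that \emph{the multifunction} $\vG$ is Pettis integrable --- this is essentially \cite[Lemma 1]{dm2}, which the paper simply cites --- but you then assert that $i\circ\vG$ is a Pettis integrable \emph{function} with values in $l_{\infty}(B_{X^*})$ and feed that into Fremlin's \cite[Theorem 8]{f1994a}. That implication is not available: Pettis integrability of $i\circ\vG$ requires integrating against the whole dual of $l_{\infty}(B_{X^*})$, which is vastly larger than the family of evaluation functionals at points of $B_{X^*}$ that your scalar computations control. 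The paper explicitly warns, immediately after this proposition, that the relation between Pettis integrability of $\vG$ and of $i\circ\vG$ is a delicate question (cf.\ \cite{ckr1}); indeed the Pettis integrability of $i\circ\vG$ is stated there as a \emph{consequence} of the proposition, not as an ingredient of its proof.

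The paper closes this gap by a different single-valued criterion: instead of Theorem 8 it uses \cite[Corollary 9 (iii)]{f1994a}, by which a Henstock integrable $l_{\infty}(B_{X^*})$-valued function is McShane integrable as soon as the series $\sum_n (H)\int_{I_n} i\circ\vG$ converges for every sequence $(I_n)_n$ of pairwise non-overlapping subintervals. This convergence needs only the countable additivity of the embedded indefinite multivalued Pettis integral, i.e.\ the fact that a multimeasure is automatically a $d_H$-multimeasure (the multivalued Orlicz--Pettis theorem), and that does follow from the Pettis integrability of $\vG$ which you established. Only after McShane integrability of $i\circ\vG$ is in hand does one obtain its Pettis integrability (McShane implies Pettis for functions), which is then what makes Theorem \ref{t11} applicable in the $\mcH$/Birkhoff case. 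A secondary, repairable point: your justification of the $w^*$-lower semicontinuity of $p_E$ as ``an integral of $w^*$-lsc functions'' needs care when $X$ is non-separable (Fatou's lemma handles sequences, not $w^*$-convergent nets); this is precisely the content of \cite[Lemma 1]{dm2}, so it is safer to quote it than to re-derive it.
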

\begin{proof}
 Let $i$ be the R{\aa}dstr\"{o}m embedding  of $cwk(X)$ into $l_{\infty}(B_{X^*})$.
If $\vG$ is Henstock integrable, then we just have to prove that $ i \circ \vG$ is McShane integrable.
 By the hypothesis we have that  $ i \circ \vG$ is Henstock  integrable.
Then, thanks to \cite[Corollary 9 (iii)]{f1994a}, it will be sufficient to prove
convergence in $l{}_{\infty}(B_{X^*})$ of all series of the type $\sum_n (H) \int_{I{}_n}  i \circ \vG$,
where $(I{}_n){}_n$ is any sequence of pairwise non-overlapping subintervals of $[0,1]$.\\
But $\vG$ is HKP-integrable and $s(x{}^*, \vG) \geq 0$ a.e.  for every $x{}^* \in  X{}^*$. It follows from \cite[Lemma 1]{dm2} that $\vG$ is Pettis integrable. Consequently, the range of the indefinite Pettis integral of $\vG$ via the R{\aa}dstr\"{o}m embedding is a vector measure. This fact guarantees the convergence of the series $\sum_n (H) \int_{I{}_n} i \circ \vG$, since $(P)\int_I\vG=(H)\int_I\vG$ and $i\circ ((H)\int_I\vG)=(H)\int_I i \circ \vG$, for every $I\in\mcI$.
\\
As said before, thanks to \cite[Corollary 9 (iii)]{f1994a}, $i \circ \vG$ is McShane integrable. Consequently, $\vG$ is McShane integrable.\\
\smallskip
 If $\vG$ is $\mcH$-integrable, then $ i \circ \vG$ is $\mcH$-integrable and being already McShane integrable, it is also Pettis integrable
\cite[Theorem 8]{{f1994a}}.
 Applying now Theorem \ref{t11}, we obtain Birkhoff integrability of $i\circ \vG$. This yields Birkhoff integrability of $\vG$ . \qed
\end{proof}

 Observe that from this proposition it follows that if $\vG$ is  Henstock integrable  and $\vG(\cdot) \ni 0$ a.e. then $ i \circ \vG$ is Pettis.
We remember that the relation between  Pettis integrability of $\vG$ and $i \circ \vG$
is delicate question and it is examined for example in \cite{ckr1}.

\begin{thm}\label{t4}
Let $\vG:[0,1]\to cwk(X)$ be a  multifunction. Then the following conditions are
equivalent:
\begin{enumerate}
\item[\textbf{\textit{(i)}}] $\vG$ is Henstock integrable;
\item[\textbf{\textit{(ii)}}]
${\mathcal{S}}_H(\vG)\not=\emp$ and for every
$f\in {\mathcal{S}}_H(\vG)$  the multifunction  $\vG- f$ is McShane integrable;
\item[\textbf{\textit{(iii)}}] there exists $f\in{{\mathcal{S}}_H(\vG)}$ such that  the multifunction  $G:=\vG-f$ is McShane integrable.
\end{enumerate}
\end{thm}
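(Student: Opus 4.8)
The plan is to prove the cycle (i) $\Rightarrow$ (ii) $\Rightarrow$ (iii) $\Rightarrow$ (i). The implication (ii) $\Rightarrow$ (iii) is immediate: under (ii) the set $\mcS_H(\vG)$ is non-empty, so we may choose any $f\in\mcS_H(\vG)$, and then $G:=\vG-f$ is McShane integrable by the second clause of (ii).

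For (iii) $\Rightarrow$ (i) I would show that the Minkowski sum of a McShane integrable multifunction and a Henstock integrable function is Henstock integrable. Since $f(t)\in\vG(t)$ for every $t$, one has the pointwise identity $\vG(t)=(\vG(t)-f(t))+f(t)=G(t)+f(t)$ in $cwk(X)$. Let $\vPh_G$ be the McShane primitive of $G$ and take the interval multifunction $I\mapsto\vPh_G(I)+\int_I f$ as candidate Henstock primitive of $\vG$. Given $\ve>0$, intersect a gauge realizing error $<\ve/2$ for $G$ (which, being McShane integrable, is a fortiori Henstock integrable since $\Pi_{\delta}^P\subseteq\Pi_{\delta}$) with a gauge realizing error $<\ve/2$ for $f$; then for any resulting $\delta$-fine Perron partition $\{(I_i,t_i)\}_{i=1}^p$, writing $S_G=\sum_{i=1}^p G(t_i)|I_i|$ and $S_f=\sum_{i=1}^p f(t_i)|I_i|$, translation invariance of $d_H$ together with the triangle inequality gives $d_H\big(\vPh_G([0,1])+\int_0^1 f,\ S_G+S_f\big)\le d_H\big(\vPh_G([0,1]),S_G\big)+\big\|\int_0^1 f-S_f\big\|<\ve$, so $\vG$ is Henstock integrable.

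For the main implication (i) $\Rightarrow$ (ii) I would first invoke the known existence theorem for Henstock integrable selections of $cwk(X)$-valued Henstock integrable multifunctions (cf. \cite{dm,cdpms2016}) to obtain $\mcS_H(\vG)\neq\emp$. Fix now an arbitrary $f\in\mcS_H(\vG)$ and set $G:=\vG-f$. Running the previous estimate in reverse — subtracting the Henstock integrable function $f$ from the Henstock integrable multifunction $\vG$, with candidate primitive $I\mapsto\vPh_{\vG}(I)-\int_I f$ — shows that $G$ is Henstock integrable (its primitive is again an interval multimeasure, being the difference of an interval multimeasure and a measure). Moreover $0=f(t)-f(t)\in\vG(t)-f(t)=G(t)$ for every $t\in[0,1]$, so $G(\cdot)\ni 0$ everywhere. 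Hence Proposition \ref{prop1} applies and yields that $G=\vG-f$ is McShane integrable; since $f$ was an arbitrary element of $\mcS_H(\vG)$, this proves (ii).

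The only genuinely nontrivial ingredient is the non-emptiness of $\mcS_H(\vG)$, i.e.\ the existence of a Henstock integrable selection; everything else reduces to routine bookkeeping with the Hausdorff metric under Minkowski translations, once Proposition \ref{prop1} (the ``$0$ in the values plus Henstock $\Rightarrow$ McShane'' step) is available. Should one wish to avoid citing the selection theorem, one may instead note that a Henstock integrable multifunction is HKP integrable — immediate from $|s(x^*,A)-s(x^*,B)|\le d_H(A,B)$ for $x^*\in B_{X^*}$ — and extract a Henstock integrable selection at that level, but the cited result makes this step transparent.
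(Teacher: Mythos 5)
Your proposal is correct and follows essentially the same route as the paper: the existence of a Henstock integrable selection is taken from \cite[Theorem 3.1]{dm}, the translated multifunction $\vG-f$ is Henstock integrable and contains $0$ in its values, and Proposition \ref{prop1} then gives McShane integrability, with the remaining implications being routine. The only difference is that you spell out the translation-invariance bookkeeping for $d_H$ that the paper leaves implicit.
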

\begin{proof}
$\bs{(i)\Rightarrow(ii)}$ According to  \cite[Theorem 3.1]{dm} ${\mathcal{S}}_H(\vG)\not=\emptyset$.   Let $f\in \mcS_{H}(\vG)$ be fixed.
Then $\vG-f$ is also Henstock integrable (in $cwk(X)$)  and   $0\in  \vG - f$ for every $t \in  [0,1]$.  By Proposition  \ref{prop1} the multifunction $\vG - f$ is McShane integrable. Since each McShane integrable multifunction is also Henstock integrable,
 $\bs{(ii)\Rightarrow(iii)}$ is trivial,
 $\bs{(iii)\Rightarrow(i)}$  follows at once. \qed
\end{proof}
The next result generalizes \cite[Theorem 3.4]{dm}, proved there for $cwk(X)$-valued multifunctions with compact valued integrals.
\begin{thm}\label{t3}
Let $\vG:[0,1]\to {cwk(X)}$ be a  multifunction.    Then the following conditions are equivalent:
\begin{enumerate}
\item[\textbf{\textit{(i)}}]
$\vG$ is McShane integrable;
\item[\textbf{\textit{(ii)}}]
$\vG$ is  Henstock integrable  and $\mcS{}_H(\vG)\subset\mcS{}_{MS}(\vG)$.
\item[\textbf{\textit{(iii)}}]
$\vG$ is  Henstock integrable  and $\mcS{}_H(\vG)\subset\mcS{}_P(\vG)$;
\item[\textbf{\textit{(iv)}}]
$\vG$ is  Henstock integrable  and $\mcS{}_P(\vG)\not=\emp$.
\item[\textbf{\textit{(v)}}]
$\vG$ is Henstock and Pettis integrable.
\end{enumerate}
\end{thm}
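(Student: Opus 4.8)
The plan is to establish the cyclic chain $\bs{(i)\Rightarrow(ii)\Rightarrow(iii)\Rightarrow(iv)\Rightarrow(v)\Rightarrow(i)}$. The recurring device is to split $\vG$ against a Henstock integrable selection $f$: one summand is the translated multifunction $\vG-f$, which always satisfies $0\in(\vG-f)(t)$ and so falls under Proposition \ref{prop1} and Theorem \ref{t4}; the other is the function $f$ itself, handled through the R{\aa}dstr\"{o}m embedding $i\colon cwk(X)\to l_\infty(B_{X^*})$ together with the canonical isometry $j\colon X\to l_\infty(B_{X^*})$, $j(x)=\langle\cdot,x\rangle$. Since $s(x^*,\vG(t))=s(x^*,\vG(t)-f(t))+\langle x^*,f(t)\rangle$, one has the identity $i\circ\vG=i\circ(\vG-f)+j\circ f$ in $l_\infty(B_{X^*})$, and as $j(X)$ is a closed subspace of $l_\infty(B_{X^*})$ containing all the Riemann sums of $j\circ f$, the McShane integral of $j\circ f$ lies in $j(X)$, so $f$ and $j\circ f$ are simultaneously McShane integrable.

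For $\bs{(i)\Rightarrow(ii)}$, note that $\vG$ is in particular Henstock integrable, so by Theorem \ref{t4} we have $\mcS_H(\vG)\neq\emp$ and $\vG-f$ is McShane integrable for every $f\in\mcS_H(\vG)$. Fixing such an $f$, both $i\circ\vG$ and $i\circ(\vG-f)$ are McShane integrable in $l_\infty(B_{X^*})$, hence so is $j\circ f=i\circ\vG-i\circ(\vG-f)$, and therefore $f\in\mcS_{MS}(\vG)$; thus $\mcS_H(\vG)\subset\mcS_{MS}(\vG)$. The step $\bs{(ii)\Rightarrow(iii)}$ is immediate because every McShane integrable function is Pettis integrable (\cite[Theorem 8]{f1994a}), so $\mcS_H(\vG)\subset\mcS_{MS}(\vG)\subset\mcS_P(\vG)$; and $\bs{(iii)\Rightarrow(iv)}$ follows since $\mcS_H(\vG)\neq\emp$ (Theorem \ref{t4}) forces $\mcS_P(\vG)\supset\mcS_H(\vG)\neq\emp$.

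For $\bs{(iv)\Rightarrow(v)}$ only Pettis integrability of $\vG$ requires an argument. As $\vG$ is Henstock integrable, each $s(x^*,\vG)$ is Henstock--Kurzweil integrable with $(HK)\int_I s(x^*,\vG)=s(x^*,\vPh_\vG(I))$ and $\vPh_\vG(I)\in cwk(X)$, so $\vG$ is HKP integrable. Taking $f\in\mcS_P(\vG)$, the identity $s(x^*,\vG-f)=s(x^*,\vG)-\langle x^*,f\rangle$ together with $W_I:=\vPh_\vG(I)-(P)\int_I f\in cwk(X)$ shows that $\vG-f$ is HKP integrable, while $0\in(\vG-f)(t)$ gives $s(x^*,(\vG-f)(t))\geq 0$ for all $t$ and all $x^*$. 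By \cite[Lemma 1]{dm2}, $\vG-f$ is Pettis integrable, and then $\vG=(\vG-f)+f$, being the Minkowski sum of the Pettis integrable multifunction $\vG-f$ and the Pettis integrable singleton-valued map $t\mapsto\{f(t)\}$, is Pettis integrable.

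The implication $\bs{(v)\Rightarrow(i)}$ is the crux, the delicate point being that Pettis integrability of $\vG$ need not pass to $i\circ\vG$; I would bypass this via the indefinite integral. Put $g:=i\circ\vG$: it is Henstock integrable with primitive $I\mapsto i(\vPh_\vG(I))$. Since $\vG$ is Pettis, $s(x^*,\vG)\in L^1$, whence $s(x^*,\vPh_\vG(I))=(HK)\int_I s(x^*,\vG)=\int_I s(x^*,\vG)=s(x^*,J_\vG(I))$, i.e. $\vPh_\vG(I)=J_\vG(I)$ on intervals; and, as recalled in Section \ref{two}, $E\mapsto i(J_\vG(E))$ is a countably additive, $\lambda$-continuous $l_\infty(B_{X^*})$-valued measure. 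For each $\phi\in l_\infty(B_{X^*})^*$, the scalar function $\langle\phi,g\rangle$ is Henstock--Kurzweil integrable and its primitive agrees on intervals with the countably additive $\lambda$-continuous measure $E\mapsto\langle\phi,i(J_\vG(E))\rangle$; by the Radon--Nikodym theorem this measure has an $L^1$ density, which must coincide a.e. with $\langle\phi,g\rangle$ (two Henstock--Kurzweil integrable functions with equal primitive agree a.e.). Hence $\langle\phi,g\rangle\in L^1$ for every $\phi$ and $E\mapsto i(J_\vG(E))$ is its Pettis integral, so $g$ is Pettis integrable. Being also Henstock integrable, $g$ is McShane integrable by \cite[Theorem 8]{f1994a}, and consequently $\vG$ is McShane integrable. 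I expect this routing through the countable additivity and $\lambda$-continuity of the embedded primitive — rather than a direct appeal to the embedding — to be the principal obstacle.
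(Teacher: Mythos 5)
Your argument is correct, and the middle of the cycle ($\bs{(ii)\Rightarrow(iii)\Rightarrow(iv)\Rightarrow(v)}$) matches the paper's proof --- for $\bs{(iv)\Rightarrow(v)}$ the paper simply quotes the decomposition theorem \cite[Theorem 2]{dm2}, whereas you unpack it by checking HKP-integrability of $\vG-f$ and invoking \cite[Lemma 1]{dm2}; same substance. You genuinely diverge at the two endpoints. For $\bs{(i)\Rightarrow(ii)}$ the paper does not subtract embeddings: it notes that a Henstock integrable selection $f$ of the Pettis integrable multifunction $\vG$ is automatically Pettis integrable (\cite[Corollary 1.5]{mu4}, \cite[Corollary 2.3]{ckr2009}) and then applies Fremlin's \cite[Theorem 8]{f1994a} to $f$; your alternative --- identifying $i\circ\vG-i\circ(\vG-f)$ with the image of $f$ under the canonical isometry of $X$ into $l_{\infty}(B_{X^*})$ and locating its McShane integral in that closed subspace via Riemann sums --- is sound and trades the selections-of-Pettis-multifunctions result for an elementary closedness argument. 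The larger divergence is $\bs{(v)\Rightarrow(i)}$: the paper gets a McShane integrable selection directly from \cite[Theorem 3.1]{dm} (which is stated precisely for Henstock and Pettis integrable multifunctions) and concludes with Theorem \ref{t4}, while you prove outright that $i\circ\vG$ is Pettis integrable, using the countable additivity and $\lambda$-continuity of the embedded measure $i(J_{\vG})$ together with the a.e.\ differentiability of Henstock--Kurzweil primitives, and only then apply Fremlin's theorem to the single function $i\circ\vG$. Your route is longer but buys something real: it settles, in this situation, the ``delicate question'' of whether Pettis integrability passes through the R{\aa}dstr\"{o}m embedding, rather than sidestepping it through the selection theorem; the paper's route is shorter because it delegates exactly that difficulty to \cite[Theorem 3.1]{dm}.
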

\begin{proof}
  $\bs{(i)\Rightarrow(ii)}$ Pick
 $f\in \mcS{}_H(\vG)$; then, according to
Theorem \ref{t4},  $\vG=G+f$ for a McShane integrable $G$. But as $\vG$
is Pettis integrable, also $f$ is Pettis integrable (cf. \cite[Corollary
1.5]{mu4}, \cite[Corollary 2.3]{ckr2009}). In view of \cite[Theorem 8]{f1994a}, $f$ is McShane integrable.\\
 $\bs{(ii)\Rightarrow(iii)}$  is valid, because each McShane integrable
function is also Pettis integrable (\cite[Theorem 2C]{fm}).\\
$\bs{(iii)\Rightarrow(iv)}$ In view of \cite[Theorem 3.1]{dm}
$\mcS{}_H(\vG)\not=\emp$ and so \textbf{\textit{(iii)}} implies  $\mcS{}_P(\vG)\not=\emp$.
\\
 $\bs{(iv)\Rightarrow(v)}$ Take  $f\in \mcS{}_P(\vG)$.  Since $\vG$ is
Henstock integrable, it is also HKP-integrable and so applying
\cite[Theorem 2]{dm2}  we obtain a representation $\vG=G+f$, where
$G:[0,1]\to{cwk(X)}$ is Pettis integrable in $cwk(X)$.  Consequently, $\vG$
is also Pettis integrable in $cwk(X)$ and so \textbf{\textit{(v)}} holds.
\\
$\bs{(v)\Rightarrow(i)}$ In virtue of \cite[Theorem 3.1]{dm} $\vG$ has a
McShane integrable selection  $f$.    It follows from Theorem \ref{t4} that  the multifunction $G:[0,1]\to cwk(X)$
defined by $\vG(t)=G(t)+f(t)$ is McShane integrable. \qed
\end{proof}
\section{Birkhoff  and $\mcH$-integrability of $cwk(X)$-valued multifunctions}
A quick analysis of the proof of \cite[Theorem 3.1]{dm} proves the following:
\begin{prop}\label{p10}
If $\vG : [0, 1] \to cwk(X)$ is $\mcH$-integrable, then $\mcS_{\mcH}(\vG)\neq\emp$.
 If  $\vG : [0, 1] \to cwk(X)$ is Pettis and $\mcH$-integrable, then $\mcS_{Bi}(\vG)\neq\emp$.
\end{prop}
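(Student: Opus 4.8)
The plan is to re-examine the proof of \cite[Theorem 3.1]{dm} and to observe that it is insensitive to the requirement that the gauges involved be measurable. That proof produces, for a Henstock (McShane) integrable $cwk(X)$-valued multifunction $\vG$, a Henstock (McShane) integrable selection $f$, by a gauge-based construction: the selection $f$ and the gauges certifying its integrability are manufactured from $\vG$ (and from $i\circ\vG$, through the R{\aa}dstr\"{o}m embedding) and from the Henstock primitive of $\vG$ by countable pointwise manipulations of the gauges that certify the integrability of $\vG$. Hence, when $\vG$ is $\mcH$-integrable, I would simply start the construction from \emph{measurable} gauges; since the gauges produced along the way are obtained from these by countable minima and by restriction to ($\lambda$-null, hence measurable) exceptional sets, they remain measurable, and the Cauchy-type estimates certifying the integrability of $f$ only involve partitions subordinate to such measurable gauges. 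Therefore the selection $f$ so obtained is in fact $\mcH$-integrable, i.e. $\mcS_{\mcH}(\vG)\neq\emp$.

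For the second assertion, assume in addition that $\vG$ is Pettis integrable, and let $f\in\mcS_{\mcH}(\vG)$ be the selection just obtained. Being $\mcH$-integrable, $f$ is (scalarly) measurable, and a scalarly measurable selection of a Pettis integrable $cwk(X)$-valued multifunction is itself Pettis integrable (\cite[Corollary 1.5]{mu4}, \cite[Corollary 2.3]{ckr2009}); thus $f$ is simultaneously $\mcH$-integrable and Pettis integrable. By Theorem \ref{t11}, $f$ is Birkhoff integrable, so $f\in\mcS_{Bi}(\vG)$ and $\mcS_{Bi}(\vG)\neq\emp$.

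The only real work lies in the first paragraph: one must walk through the construction of \cite[Theorem 3.1]{dm} and check, line by line, that measurability of the gauges is never destroyed. The two places where a non-measurable gauge could conceivably intrude --- an infimum of gauges over an uncountable family, or a gauge tailored to a non-measurable exceptional set --- do not occur there, the exceptional sets being $\lambda$-null and the combinations of gauges being countable, so this is routine bookkeeping rather than a genuine difficulty. It is also worth noting that the selection $f$ is automatically measurable, since it arises as a pointwise limit of simple functions associated with the $\delta$-fine Perron partitions used in the construction; this is exactly what licenses the appeal to the Pettis integrability of scalarly measurable selections in the second part.
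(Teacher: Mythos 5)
Your proposal is correct and follows essentially the same route as the paper, whose entire argument for this proposition is the remark that ``a quick analysis of the proof of \cite[Theorem 3.1]{dm}'' (checking that measurability of the gauges is preserved throughout the construction) yields the first assertion. Your derivation of the second assertion --- passing from the $\mcH$-integrable selection to its Pettis integrability via \cite[Corollary 1.5]{mu4}, \cite[Corollary 2.3]{ckr2009} and then to Birkhoff integrability via Theorem \ref{t11} --- is exactly the mechanism the paper itself employs in the proof of Theorem \ref{t3a}, so no discrepancy arises there either.
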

As a consequence, we have the following result:
\begin{thm}\label{t4a}
Let $\vG:[0,1]\to cwk(X)$ be a  multifunction.  Then the following conditions are
equivalent:
\begin{enumerate}
\item[\textbf{\textit{(i)}}]
$\vG$ is $\mcH$-integrable;
\item[\textbf{\textit{(ii)}}]
${\mathcal{S}}{}_{\mcH}(\vG)\not=\emp$ and for every
$f\in {\mathcal{S}}{}_{\mcH}(\vG)$  the multifunction
$\vG - f$ is Birkhoff integrable;
\item[\textbf{\textit{(iii)}}]
there exists $f\in{{\mathcal{S}}{}_{\mcH}(\vG)}$ such that  the multifunction
$\vG - f$ is Birkhoff integrable.
\end{enumerate}
\end{thm}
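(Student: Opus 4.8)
The plan is to mimic, step by step, the proof of Theorem \ref{t4}, reading it ``one notch higher'' in the hierarchy of gauge integrals: Henstock is replaced throughout by $\mcH$, McShane by Birkhoff ($=\mathcal{M}$), and the selection result \cite[Theorem 3.1]{dm} by Proposition \ref{p10}. The engine of the argument is the $\mcH$-part of Proposition \ref{prop1}, which already says that an $\mcH$-integrable multifunction containing $0$ almost everywhere is Birkhoff integrable; granting this, the three implications become routine.

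For $\bs{(i)\Rightarrow(ii)}$ I would argue as follows. If $\vG$ is $\mcH$-integrable then $\mcS_{\mcH}(\vG)\neq\emp$ by Proposition \ref{p10}. Fix $f\in\mcS_{\mcH}(\vG)$. Since $f(t)\in\vG(t)$ for every $t$, the multifunction $\vG-f$ (the Minkowski sum of $\vG$ with the constant selection $-f(\cdot)$) is $cwk(X)$-valued and satisfies $0\in(\vG-f)(t)$ for all $t\in[0,1]$; it is moreover $\mcH$-integrable, being the difference of the $\mcH$-integrable maps $\vG$ and $f$. Here I would pass through the R{\aa}dstr\"{o}m embedding $i$: since $i$ is additive, $i\circ(\vG-f)=i\circ\vG-i\circ f$ is $\mcH$-integrable in $l_{\infty}(B_{X^*})$, and $\mcH$-integrability of $\vG-f$ is equivalent to that of $i\circ(\vG-f)$. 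Proposition \ref{prop1} applied to $\vG-f$ now gives its Birkhoff integrability, which is (ii).

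The implication $\bs{(ii)\Rightarrow(iii)}$ is immediate, because Proposition \ref{p10} guarantees that $\mcS_{\mcH}(\vG)$ is nonempty, so an admissible $f$ exists. For $\bs{(iii)\Rightarrow(i)}$, let $f\in\mcS_{\mcH}(\vG)$ with $G:=\vG-f$ Birkhoff integrable. Birkhoff integrability coincides with $\mathcal{M}$-integrability (cf. Proposition \ref{prop1}), and $\mathcal{M}$-integrability trivially implies $\mcH$-integrability since every $\delta$-fine Perron partition is in particular a $\delta$-fine partition; hence $G$ is $\mcH$-integrable. Adding the $\mcH$-integrable selection $f$, and again using additivity of $i$ and of the $\mcH$-integral (so that $i\circ\vG=i\circ G+i\circ f$ is $\mcH$-integrable), we conclude that $\vG$ is $\mcH$-integrable.

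The only delicate bookkeeping point I anticipate is the closure of $\mcH$-integrability under Minkowski sums and differences of a $cwk(X)$-valued multifunction with a single-valued selection; this is where I expect most readers to pause. It is, however, fully handled by the R{\aa}dstr\"{o}m embedding, which converts Minkowski addition into vector addition in $l_{\infty}(B_{X^*})$, together with the equivalence between $\mcH$-integrability of a multifunction and that of its embedded image. All the genuinely substantive content --- the existence of $\mcH$-integrable selections and the upgrade from $\mcH$-integrability to Birkhoff integrability in the presence of $0$ in the values --- has already been isolated in Propositions \ref{p10} and \ref{prop1}, so no new hard estimate is required.
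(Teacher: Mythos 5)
Your proposal is correct and follows exactly the route the paper takes: the paper's proof of Theorem \ref{t4a} consists precisely of rerunning the proof of Theorem \ref{t4} with Proposition \ref{p10} in place of the selection theorem of Di Piazza--Musia{\l} and the $\mcH$-part of Proposition \ref{prop1} supplying the upgrade to Birkhoff integrability, declaring the remaining implications trivial. Your write-up merely spells out the bookkeeping (R{\aa}dstr\"{o}m embedding, $0\in(\vG-f)(t)$, Birkhoff $=\mathcal{M}\Rightarrow\mcH$) that the paper leaves implicit.
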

\begin{proof}
$\bs{(i)\Rightarrow(ii)}$ Instead of  \cite[Theorem 3.1]{dm} we apply Proposition \ref{p10}. The remaining implications are trivial. \qed
\end{proof}

Applying Theorems \ref{t4a} and \ref{t11}, we have the following:
\begin{thm}\label{t3a}
Let $\vG:[0,1]\to {cwk(X)}$ be a  multifunction.    Then the following conditions are equivalent:
\begin{enumerate}
\item[\textbf{\textit{(i)}}]
$\vG$ is Birkhoff integrable;
\item[\textbf{\textit{(ii)}}]
$\vG$ is  $\mcH$-integrable  and $\mcS{}_{\mcH}(\vG)\subset\mcS{}_{Bi}(\vG)$.
\item[\textbf{\textit{(iii)}}]
$\vG$ is  $\mcH$-integrable  and $\mcS{}_{\mcH}(\vG)\subset\mcS{}_{MS}(\vG)$.
\item[\textbf{\textit{(iv)}}]
$\vG$ is  $\mcH$-integrable  and $\mcS{}_{\mcH}(\vG)\subset\mcS{}_P(\vG)$;
\item[\textbf{\textit{(v)}}]
$\vG$ is  $\mcH$-integrable  and $\mcS{}_P(\vG)\not=\emp$.
\item[\textbf{\textit{(vi)}}]
$\vG$ is Pettis and $\mcH$-integrable.
\end{enumerate}
\end{thm}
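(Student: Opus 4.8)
The plan is to mirror exactly the argument of Theorem \ref{t3}, replacing Henstock by $\mcH$, McShane by Birkhoff, and Theorem \ref{t4} by Theorem \ref{t4a}; the new ingredient that makes the $\mcH$-version work is Theorem \ref{t11} (the characterization ``Birkhoff $\Leftrightarrow$ $\mcH$ $+$ Pettis''). First I would prove $\bs{(i)\Rightarrow(ii)}$: assume $\vG$ is Birkhoff integrable. By Proposition \ref{p10} (applied since Birkhoff integrable multifunctions are in particular $\mcH$-integrable, being Pettis integrable as noted in the Remark after $d_H$-multimeasures) we have $\mcS_{\mcH}(\vG)\neq\emp$; pick $f\in\mcS_{\mcH}(\vG)$. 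By Theorem \ref{t4a} we get $\vG=G+f$ with $G$ Birkhoff integrable. Since $\vG$ is Birkhoff hence Pettis integrable, and $G$ is Pettis integrable, $f$ is Pettis integrable (here I would cite the selection-stability results \cite[Corollary 1.5]{mu4} or \cite[Corollary 2.3]{ckr2009}, exactly as in the proof of Theorem \ref{t3}). Being both $\mcH$-integrable (as a selection of an $\mcH$-integrable multifunction, via Proposition \ref{p10}) and Pettis integrable, $f$ is Birkhoff integrable by Theorem \ref{t11}, so $f\in\mcS_{Bi}(\vG)$.

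Next, $\bs{(ii)\Rightarrow(iii)}$ follows from $\mcS_{Bi}(\vG)\subset\mcS_{MS}(\vG)$, since every Birkhoff integrable function is McShane ($=\mathcal{M}$) integrable. Then $\bs{(iii)\Rightarrow(iv)}$ is immediate because every McShane integrable function is Pettis integrable by \cite[Theorem 2C]{fm}. For $\bs{(iv)\Rightarrow(v)}$: $\mcH$-integrability of $\vG$ gives $\mcS_{\mcH}(\vG)\neq\emp$ by Proposition \ref{p10}, hence $\mcS_P(\vG)\neq\emp$ by (iv). For $\bs{(v)\Rightarrow(vi)}$: pick $f\in\mcS_P(\vG)$; since $\vG$ is $\mcH$-integrable it is HKP-integrable, so by \cite[Theorem 2]{dm2} one gets $\vG=G+f$ with $G:[0,1]\to cwk(X)$ Pettis integrable, whence $\vG$ is Pettis integrable, giving (vi). Finally $\bs{(vi)\Rightarrow(i)}$: by Proposition \ref{p10}, since $\vG$ is Pettis and $\mcH$-integrable, $\mcS_{Bi}(\vG)\neq\emp$; take $f\in\mcS_{Bi}(\vG)$, and by Theorem \ref{t4a} the multifunction $G$ with $\vG=G+f$ is Birkhoff integrable, so $\vG$ is the sum of two Birkhoff integrable multifunctions, hence Birkhoff integrable.

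The main obstacle I anticipate is in $\bs{(i)\Rightarrow(ii)}$ and $\bs{(vi)\Rightarrow(i)}$: one must be careful that all the tools invoked for the Henstock/McShane setting (the selection-existence result of the type \cite[Theorem 3.1]{dm}, and the decomposition \cite[Theorem 2]{dm2}) genuinely have $\mcH$/Birkhoff counterparts. The existence of $\mcH$- and Birkhoff-integrable selections is precisely what Proposition \ref{p10} supplies, so that gap is already filled; and the HKP-decomposition \cite[Theorem 2]{dm2} is a statement about HKP-integrable $\vG$ with a prescribed Pettis selection, which applies verbatim since $\mcH$-integrability implies HKP-integrability. The one genuinely new step with no analogue in the proof of Theorem \ref{t3} is the appeal to Theorem \ref{t11} to upgrade ``$f$ is $\mcH$-integrable and Pettis integrable'' to ``$f$ is Birkhoff integrable''; everything else is a faithful transcription. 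I would also remark, as the authors do after Theorem \ref{t3}, that (vi) together with Proposition \ref{p10} yields the decomposition $\vG=G+f$ with $G$ Birkhoff integrable and $f\in\mcS_{Bi}(\vG)$, which is the extension of Fremlin's theorem advertised in the introduction.
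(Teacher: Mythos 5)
Your proposal is correct and follows essentially the same route as the paper: the same cycle of implications, the same appeals to Proposition \ref{p10}, Theorem \ref{t4a}, Theorem \ref{t11}, the selection-stability results of \cite{mu4,ckr2009}, and \cite[Theorem 2]{dm2}. The only blemish is your parenthetical in $(i)\Rightarrow(ii)$: Birkhoff integrability of $\vG$ yields $\mcH$-integrability because Birkhoff coincides with $\mathcal M$-integrability (measurable-gauge McShane), which trivially implies $\mcH$-integrability, not because $\vG$ is Pettis integrable.
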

\begin{proof}
  $\bs{(i)\Rightarrow(ii)}$ If $f\in \mcS{}_{\mcH}(\vG)$ then, according to
Theorem \ref{t4a},  $\vG=G+f$ for a Birkhoff integrable $G$. But as $\vG$
is Pettis integrable, also $f$ is Pettis integrable (cf. \cite[Corollary 2.3]{ckr2009}, \cite[Corollary1.5]{mu4}). In view of Theorem \ref{t11} $f$ is Birkhoff integrable.
\\
 $\bs{(ii)\Rightarrow(iii)\Rightarrow(iv)}$  are valid, because each Birkhoff integrable function
is McShane integrable (\cite[Proposition 4]{fun}) and each McShane integrable
function is also Pettis integrable (\cite[Theorem 2C]{fm}).
\\
$\bs{(iv)\Rightarrow(v)}$ In view of Proposition \ref{p10}
$\mcS{}_{\mcH}(\vG)\not=\emp$ and so \textbf{\textit{(iii)}} implies  $\mcS{}_P(\vG)\not=\emp$.
\\
 $\bs{(v)\Rightarrow(vi)}$ Take  $f\in \mcS{}_P(\vG)$.  Since $\vG$ is
${\mcH}$-integrable, it is also HKP-integrable and so applying
\cite[Theorem 2]{dm2}  we obtain a representation $\vG=G+f$, where
$G:[0,1]\to{cwk(X)}$ is Pettis integrable in $cwk(X)$.  Consequently, $\vG$
is also Pettis integrable in $cwk(X)$ and so \textbf{\textit{(v)}} holds.
\\
$\bs{(vi)\Rightarrow(i)}$ In virtue of Proposition \ref{p10}  $\vG$ has a
Birkhoff integrable selection  $f$.    It follows from Theorem \ref{t4a} that  the multifunction $G:[0,1]\to cwk(X)$
defined by $G:=\vG- f$ is Birkhoff integrable. \qed
\end{proof}

\section{Variationally Henstock integrable selections}
	 Now, in order to examine \cite[Question 3.11]{cdpms2016},  we are going to
 consider the existence of   variationally Henstock integrable selections for a variationally Henstock  integrable multifunction $\vG:[0,1]\to{cwk(X)}$.
In particular we extend
 \cite[Theorem 3.12]{cdpms2016} which gives only a partial answer, and we remove the hypothesis that $X$ has the Radon-Nikod\'ym property
or the hypothesis ${\mathcal{S}}_{vH} \neq \emptyset$ in the theorems of decomposition arising from the previous quoted result;
so we  give a complete answer to the open question.
\\

First of all we give the following result which extends  \cite[Theorem 3.12]{cdpms2016}.
\begin{thm}\label{T4.1}
Let  $\vG:[0,1]\to cwk(X)$ be any variationally Henstock integrable multifunction. Then  ${\mathcal{S}}_{vH} \neq \emptyset$ and every strongly
measurable selection of $\vG$ is also variationally Henstock integrable.
\end{thm}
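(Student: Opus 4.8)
The plan is to produce a variationally Henstock integrable selection by working through the R\aa dstr\"{o}m embedding $i:cwk(X)\to l_\infty(B_{X^*})$ and exploiting the Bochner measurability that comes for free from variational Henstock integrability (\cite[Proposition 2.8]{cdpms2016}). First I would recall that, since $\vG$ is Bochner measurable, there is a sequence of simple multifunctions $\vG_n$ with $d_H(\vG_n(t),\vG(t))\to 0$ a.e.; in particular $\vG$ is scalarly measurable with separable range in $(cwk(X),d_H)$ after discarding a null set, so $\vG$ essentially takes values in a separable subspace $X_0$ of $X$. On a separable space the usual Kuratowski--Ryll-Nardzewski / Castaing-type selection theorems apply, so $\vG$ admits a strongly measurable selection $f_0$. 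This reduces the theorem to its \emph{second} assertion: every strongly measurable selection of $\vG$ is variationally Henstock integrable. So the core is: given strongly measurable $f$ with $f(t)\in\vG(t)$ for all $t$, show $f$ is $vH$-integrable.

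To prove that, I would set $W:=\vPh_\vG$, the variational Henstock primitive of $\vG$ (an interval multimeasure), and aim to define the candidate primitive of $f$ as a ``selection'' of $W$ at the level of interval functions. The key analytic fact is that variational Henstock integrability of $\vG$ forces the embedded primitive $i\circ W$ to have finite variation as an $l_\infty(B_{X^*})$-valued interval measure, and more precisely that the variational measure $V_{\vPh_\vG}$ (Definition \ref{vma}) is absolutely continuous and, being the infimum over gauges, controls the oscillation of $\vG$ along $\delta$-fine Perron partitions. I would then show that for a strongly measurable selection $f$, the interval function $\Phi_f(I):=(H)\int_I f$ exists: indeed, since $f$ is a measurable selection and $\|f(t)\|\le\|\vG(t)\|$, and since the scalar function $t\mapsto\|\vG(t)\|$ is controlled by the finite variation of $\vPh_\vG$, $f$ is Henstock (indeed McShane on the RNP-free part via the separable reduction) integrable along $\delta$-fine partitions; the point is to upgrade this to the \emph{variational} estimate $\sum_j d_H\!\big(\vPh_\vG(I_j),\vG(t_j)|I_j|\big)<\ve$ controlling $\sum_j\|\Phi_f(I_j)-f(t_j)|I_j|\|$. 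Here one uses that for each fixed tagged partition, $f(t_j)\in\vG(t_j)$ and $\Phi_f(I_j)$ can be chosen inside $\vPh_\vG(I_j)$ (by passing to the support functions and using $s(x^*,\vPh_\vG(I))\ge (HK)\int_I s(x^*,f)$ on a countable determining set of $x^*$, then using separability to make this hold for all $x^*$), so the distance $\|\Phi_f(I_j)-f(t_j)|I_j|\|$ is dominated by $d_H(\vPh_\vG(I_j),\vG(t_j)|I_j|)$ up to the support-function/Hausdorff-distance identification $d_H(A,B)=\sup_{x^*\in B_{X^*}}|s(x^*,A)-s(x^*,B)|$.

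Concretely, the steps in order: (1) separable reduction of $\vG$ via Bochner measurability, giving existence of a strongly measurable selection and letting us assume $X$ separable for the remainder; (2) fix a strongly measurable selection $f$; using $\|f\|\le\|\vG\|$ and the finite variation of $\vPh_\vG$, show the indefinite Henstock integral $\Phi_f:\mcI\to X$ of $f$ exists and is an additive interval function; (3) for each interval $I$, show $\Phi_f(I)$ lies in $\vPh_\vG(I)$ by a support-function comparison on a countable norming set (using $s(x^*,\vPh_\vG(I))\ge(HK)\int_I s(x^*,f)$, which itself follows because $\vPh_\vG$ is the primitive of $\vG$ and $f$ is a selection — this may require a limiting argument from simple multifunctions); (4) given $\ve>0$, take the gauge $\delta$ from variational Henstock integrability of $\vG$; for any $\delta$-fine Perron partition $\{(I_j,t_j)\}$, estimate $\sum_j\|\Phi_f(I_j)-f(t_j)|I_j|\|$ by $\sum_j\sup_{x^*}|s(x^*,\Phi_f(I_j))-s(x^*,f(t_j)|I_j|)|\le\sum_j d_H(\vPh_\vG(I_j),\vG(t_j)|I_j|)<\ve$, using $\Phi_f(I_j)\in\vPh_\vG(I_j)$ and $f(t_j)|I_j|\in\vG(t_j)|I_j|$ together with the elementary bound that for points $a\in A$, $b\in B$ in $cwk(X)$ one has $\|a-b\|\le\|A-B\|_{d_H}$ only after a careful choice — so in fact I would instead compare scalarly: $|s(x^*,\Phi_f(I_j))-\langle x^*,f(t_j)\rangle|I_j||\le|s(x^*,\vPh_\vG(I_j))-s(x^*,\vG(t_j))|I_j||$ pointwise in $x^*$ whenever $s(x^*,\Phi_f(I_j))$ is squeezed between the two, which holds since $\langle x^*,f(t_j)\rangle\le s(x^*,\vG(t_j))$ and $s(x^*,\Phi_f(I_j))\le s(x^*,\vPh_\vG(I_j))$, handling the lower bound symmetrically via $-x^*$.

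The main obstacle I anticipate is step (3)–(4): ensuring that the Henstock primitive $\Phi_f(I)$ of the selection actually sits \emph{inside} the primitive multifunction $\vPh_\vG(I)$ for every interval, and that the scalar squeezing argument gives the two-sided bound uniformly over $B_{X^*}$ so that summing produces $d_H$ and hence $<\ve$. The delicate point is that $s(x^*,\Phi_f(I))$ need not lie between $s(-x^*,\vG)$-type quantities for the \emph{same} $x^*$ at the level of a single interval unless one knows $(HK)\int_I s(x^*,f)\le s(x^*,\vPh_\vG(I))$ and $\ge -s(-x^*,\vPh_\vG(I))$; establishing these two inequalities, uniformly enough to survive the supremum over $B_{X^*}$ and the sum over the partition, is exactly where the ``more sophisticated techniques'' beyond plain R\aa dstr\"{o}m embedding (mentioned in the introduction) enter, and I would expect to prove them by approximating $\vG$ by the simple multifunctions $\vG_n$, for which the selection statement is elementary, and passing to the limit using $d_H(\vG_n,\vG)\to 0$ a.e. together with the variational control.
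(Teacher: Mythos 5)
Your reduction to the second assertion is fine and agrees with the paper (Bochner measurability of $\vG$ follows from \cite[Proposition 2.8]{cdpms2016} and yields strongly measurable selections), but the heart of your argument, steps (3)--(4), rests on an inequality that is false. From $\Phi_f(I_j)\in\vPh_\vG(I_j)$ and $f(t_j)|I_j|\in\vG(t_j)|I_j|$ one cannot deduce $\|\Phi_f(I_j)-f(t_j)|I_j|\|\le d_H\bigl(\vPh_\vG(I_j),\vG(t_j)|I_j|\bigr)$: the Hausdorff distance controls the distance from a point of $A$ to the \emph{nearest} point of $B$, not to an arbitrary one, and the scalar ``squeezing'' variant fails for the same reason, since $\langle x^*,\Phi_f(I)\rangle\le s(x^*,\vPh_\vG(I))$ and $\langle x^*,f(t)\rangle\le s(x^*,\vG(t))$ are one-sided bounds on each term separately and give no control on their difference. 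Concretely, take $X=\R$, $\vG\equiv[-1,1]$ and the selection $f=1_A-1_{[0,1]\smi A}$ with $A=[0,1/2]$. Then $\vPh_\vG(I)=[-|I|,|I|]$ and $d_H(\vPh_\vG(I),\vG(t)|I|)=0$ for every $t\in I$, while for $I=[0,1]$, $t=1/4$ one has $\bigl|\int_I f-f(t)|I|\bigr|=1$. (Here $f$ is of course vH-integrable, so the theorem is not contradicted; only your estimate is.) The structural point is that the gauge furnished by the variational integrability of $\vG$ carries no information about how a particular selection oscillates \emph{inside} the values of $\vG$, so no direct transfer of that gauge to $f$ can work. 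Step (2) is also shaky: $\vPh_\vG$ need not have finite variation, and $\|f\|\le\|\vG\|$ alone does not produce a Henstock primitive for $f$.

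The paper avoids estimating the variational sums of $f$ altogether. It sets $G:=\vG-f$, which is Pettis integrable by the decomposition theorem \cite[Theorem 1]{dm2} applied to the HKP-integrable $\vG$ and its strongly measurable selection $f$; then $i(G)$ is strongly measurable with essentially $d_H$-separable range, hence Pettis integrable in $l_\infty(B_{X^*})$. Absolute continuity of the variational measure $V_\Phi$ of the vH-primitive of $\vG$ (from \cite{porcello}) dominates that of the primitive $\phi$ of $f$ --- the inclusion $\phi(I)\in\Phi(I)$ enters only through $\|\phi(I)\|\le\|\Phi(I)\|$ --- whence $V_G\ll\lambda$. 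The single-valued criterion \cite[Corollary 4.1]{BDpM2} (strongly measurable, Pettis integrable, $\lambda$-continuous variational measure implies variationally Henstock integrable) then applies to $i(G)$, and $f$ is recovered as the difference of the two vH-integrable maps $i(\vG)$ and $i(G)$. To repair your proof you would need some such external characterization supplying the variational estimate for $G$ (or $f$); the approximation by simple multifunctions you invoke at the end does not by itself produce one.
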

\begin{proof}
Let us notice first that  $\vG$  is Bochner measurable and so it possesses strongly  measurable selections \cite[Proposition 3.3]{cdpms2016} (the quoted result is a consequence of \cite{hvvk}).
Let $f$ be a strongly
measurable selection of $\vG$. Then $f$ is Henstock-Kurtzweil-Pettis integrable and the mapping $G$ defined by
$G:=\vG-f$
is Pettis integrable: see \cite[Theorem 1]{dm2}.
Since $\vG$ is vH-integrable then $\vG$ is Bochner measurable (\cite[Proposition 2.8]{cdpms2016}).
As the difference of $i(\vG)$ and $i(\{f\})$, the function  $i(G)$ is strongly
measurable, together with $G$.  Therefore $G$ has essentially $d_H$-separable range (that is, there is $E\in \mathcal{L}$, with $\lambda( [0,1] \setminus E)=0$ and $G(E)$ is $d_H$-separable) and so $i(G)$ is also Pettis integrable
(see \cite[Theorem 3.4 and Lemma 3.3 and their proofs]{CASCALES2}).
\\
Now, since $\vG$ is variationally Henstock integrable,  the variational measure $V_{\Phi}$ associated to the vH-integral of $\vG$  is absolutely continuous (see  \cite[Proposition 3.3.1]{porcello}). If
$V_{\phi}$ is associated to the    Henstock-Kurzweil-Pettis integral of  $f$, then $V_{\phi}\leq V_{\Phi}$ and so it is also  absolutely continuous with respect to
$\lambda$.
 Since  $\|G\|\leq \|\vG\|+\|f\|$,  it is  clear that also $V_G$  is  $\lambda$-continuous.
\\
Then, $i(G)$ satisfies all the hypotheses of
\cite[Corollary 4.1]{BDpM2}
and therefore it is variationally Henstock integrable. But then $i(\{f\})$ is too, as the difference of $i(\vG)$ and $i(G)$, and finally $f$ is variationally Henstock integrable. \qed
\end{proof}

 \begin{rem}
\rm At this point it is worth to observe that the thesis of  Theorem \ref{T4.1}	holds true only for strongly
measurable selections of $\vG$. In general,  $\vG$ may have scalarly measurable selections which are neither strongly
measurable nor even Henstock integrable (see \cite[Proposition 3.2]{cdpms2016} and \cite[Theorem 3.7]{apr}).
\end{rem}

A decomposition result,  similar to
Theorem \ref{t4a},
can be formulated now. It is also given in \cite[Corollary 3.5]{cdpms2016a} but with a different proof.
\begin{thm}\label{decofinal}{\rm (\cite[Corollary 3.5]{cdpms2016a})}
Let $\vG:[0,1]\to cwk(X)$ be a variationally Henstock integrable
multifunction. Then $\vG$ is the sum of a variationally Henstock
integrable selection $f$ and a   Birkhoff integrable multifunction
$G:[0,1]\to cwk(X)$ that is variationally Henstock integrable.
\end{thm}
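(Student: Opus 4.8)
The plan is to combine Theorem \ref{T4.1} with the Pettis-type decomposition already invoked in its proof. First I would apply Theorem \ref{T4.1} to the variationally Henstock integrable multifunction $\vG$: this yields $\mcS_{vH}(\vG)\neq\emp$, so we may fix $f\in\mcS_{vH}(\vG)$. Since $\vG$ is variationally Henstock integrable it is in particular Henstock integrable, hence HKP-integrable, and $f$ being variationally Henstock integrable is Henstock-Kurzweil-Pettis integrable as well. By \cite[Theorem 2]{dm2} there is a representation $\vG=G+f$ with $G:[0,1]\to cwk(X)$ Pettis integrable; note $0\in G(t)$ for all $t$. The goal is then to show that this $G$ is simultaneously Birkhoff integrable and variationally Henstock integrable.

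For the variational Henstock integrability of $G=\vG-f$, I would argue exactly as in the proof of Theorem \ref{T4.1}: since both $\vG$ and $\{f\}$ are variationally Henstock integrable, so is $i(\vG)-i(\{f\})=i(G)$ (the variational Henstock integral is additive), and hence $G$ itself is variationally Henstock integrable. Alternatively one can observe directly that $G$ inherits Bochner measurability and the needed absolute continuity of its variational measure $V_G$ from $V_{\Phi_\vG}$ and $V_{\Phi_f}$, and then apply \cite[Corollary 4.1]{BDpM2} to $i(G)$. For the Birkhoff integrability of $G$: a variationally Henstock integrable multifunction is in particular Henstock integrable, and since $0\in G(t)$ a.e., Proposition \ref{prop1} does not immediately give Birkhoff — it gives McShane when $G$ is Henstock, and Birkhoff when $G$ is $\mcH$-integrable. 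So the cleaner route is to note that $G$ is Pettis integrable and $\mcH$-integrable: indeed, a variationally Henstock integrable function is $\mcH$-integrable (variationally Henstock integrability is stronger, and in particular the gauge can be taken measurable — or one uses that $G$ is Bochner measurable with $\lambda$-continuous variational measure, hence $\mcH$-integrable via the embedding), and $G$ is Pettis by construction. Then Theorem \ref{t11} applied to $i(G)$ — which is $\mcH$-integrable and Pettis integrable in $l_\infty(B_{X^*})$ — gives that $i(G)$ is Birkhoff integrable, whence $G$ is Birkhoff integrable.

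The main obstacle I anticipate is the passage between the multifunction $G$ and its embedding $i(G)$ for the Pettis property: Pettis integrability of $G$ in $cwk(X)$ does not automatically transfer to Pettis integrability of $i(G)$ in $l_\infty(B_{X^*})$ in general (this is precisely the delicate point flagged after Proposition \ref{prop1}). Here it is resolved because $G$, being the difference of the Bochner measurable $\vG$ and the strongly measurable $\{f\}$, has essentially $d_H$-separable range, so $i(G)$ is strongly measurable and therefore Pettis integrability does transfer (as in \cite[Theorem 3.4, Lemma 3.3]{CASCALES2}); this is exactly the argument already run inside the proof of Theorem \ref{T4.1}, which I would reuse verbatim. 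Once $i(G)$ is known to be both Pettis and $\mcH$-integrable, Theorem \ref{t11} closes the Birkhoff part, and the variational Henstock part follows by subtraction, completing the decomposition $\vG=f+G$ with the stated properties.
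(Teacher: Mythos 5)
Your decomposition is the same as the paper's: take a variationally Henstock integrable selection $f$ (Theorem \ref{T4.1}), set $G:=\vG-f$, get Pettis integrability of $G$ from Di Piazza--Musia{\l}, get variational Henstock integrability of $G$ by subtraction through the embedding, and then upgrade to Birkhoff. Up to that last step the argument is sound (one small slip: you should cite \cite[Theorem 1]{dm2}, not Theorem 2 --- the latter is the version for a \emph{Pettis} integrable selection, and a variationally Henstock integrable $f$ need not be Pettis integrable; Theorem 1 is the one for strongly measurable HKP-integrable selections, and it is what the paper uses here and in the proof of Theorem \ref{T4.1}).

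The genuine gap is in your route to Birkhoff integrability. You rest it on the claim that a variationally Henstock integrable function is $\mcH$-integrable ``since the gauge can be taken measurable.'' That is not a justification: the whole point of the $\mcH$-integral is that one cannot in general replace an arbitrary gauge by a measurable one, and the implication you are asserting is precisely Theorem \ref{cinquesei} of this paper, which is proved only later and by a rather delicate argument (strong Riemann measurability, Naralenkov's interior Perron partitions). Invoking Theorem \ref{cinquesei} explicitly would not be circular, but as written you have asserted a nontrivial theorem as if it were immediate. The paper avoids this entirely: once $i(G)$ is known to be variationally Henstock integrable and $G$ is Pettis integrable, it quotes \cite[Proposition 4.1]{cdpms2016} to conclude Birkhoff integrability. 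If you want a self-contained repair along your lines, the cleaner path is the one implicit in that proposition: $i(G)$ is strongly measurable (essentially $d_H$-separable range, as you correctly note), hence Riemann measurable; being Henstock and Pettis integrable it is McShane integrable by \cite[Theorem 8]{f1994a}; and Riemann measurability plus McShane integrability gives $\mathcal M$-, i.e.\ Birkhoff, integrability by \cite[Theorem 7]{nara}. This bypasses the unproved ``vH $\Rightarrow\mcH$'' step and your appeal to Theorem \ref{t11} altogether.
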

\begin{proof}
Let $f$ be any variationally Henstock integrable selection of $\vG$. Then,
as previously proved,  $\vG$ is Bochner measurable,  $f$ is strongly
measurable and the variational measures associated with their integral
functions are $\lambda$-continuous. Moreover,
$f$ is HKP-integrable and, according to \cite[Theorem 1]{dm2}, the
multifunction $G$, defined by $G:=\vG-f$, is Pettis integrable. Since $\vG$ and
$f$ are variationally Henstock integrable the same holds true for  $G$.
 Hence also $i(G)$ is variationally Henstock integrable
 and, consequently, by  \cite[Proposition 4.1]{cdpms2016},
 $G$ is also Birkhoff integrable. \qed
\end{proof}

\begin{rem}\label{r2} \rm
There is now an obvious question: Let $\vG:[0,1]\to cwk(X)$ be a variationally Henstock integrable multifunction. Does there exist a variationally Henstock integrable selection $f$ of $\vG$ such that $G:=\vG-f$ is variationally McShane integrable?

Unfortunately, in general, the answer is negative. The argument is similar to that applied in \cite{dm4}.
Assume that $X$ is separable and $g$ is the $X$-valued function constructed in \cite{dmar} that is vH (and so strongly measurable by \cite[Proposition 2.8]{cdpms2016}),  Pettis but not vMS-integrable (see \cite{dmar}). Let $\vG(t):={\rm conv}\{0,g(t)\}$. Then,  $\vG$ is vH-integrable (see \cite[Example 4.7]{cdpms2016}) but it is not vMS-integrable (\cite[Theorem 3.7]{cdpms2016} or \cite[Example 4.7]{cdpms2016}) and possesses at least one  vH-integrable selection 
 by Theorem \ref{T4.1} .
Let now $f \in {\mathcal{S}}_{vH} (\Gamma)$ and  consider the multifunction $G=\vG-f$. Clearly $G$ is vH-integrable and  $G(t)={conv}\{-f(t),g(t)-f(t)\}$ for all $t\in[0,1]$.
If we suppose that $G$ is  variationally McShane integrable, then its selections $-f, g-f$ will be Bochner integrable since they are strongly measurable and dominated by $\|G\|$, but that would mean that $g$ is Bochner integrable, contrary to the assumption.\hfill$\Box$
\end{rem}

The next  theorems \ref{t31}
extends \cite[Theorems 4.3, 4.4]{cdpms2016}. In fact we can remove the hypothesis ${\mathcal{S}}_{vH} (\Gamma) \neq \emptyset$ thanks to Theorem \ref{T4.1} and \cite[Proposition 3.6]{cdpms2016}.  Its
 proof is the same of the quoted results in \cite{cdpms2016}.
\begin{thm}\label{t31}
Let $\vG:[0,1]\to {cwk(X)}$ be a vH-integrable multifunction.
Then    the following equivalences hold true:
\begin{itemize}
\item 
$ \mcS_{vH}(\vG)\subset\mcS_{MS}(\vG)$; 
\item 
$\mcS_{vH}(\vG)\subset\mcS_P(\vG)$;
 \item
$\mcS_P(\vG)\not=\emp; $
\item
$\vG$ \mbox{is Pettis integrable;} 
\item
 $\vG$ \mbox{is McShane integrable.}
\end{itemize}
Moreover if $\vG$ is also integrably bounded, then   all the previous statements  are equivalent to  the variational McShane integrability of $\vG$.
\end{thm}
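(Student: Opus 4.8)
The plan is to prove the five bulleted properties equivalent by a single cycle of implications, using the decomposition of Theorem~\ref{decofinal} as the main engine, and then to dispose of the ``moreover'' clause separately through the R{\aa}dstr\"{o}m embedding.

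First I would record the two structural facts that drive everything. Since $\vG$ is vH-integrable it is Bochner measurable (\cite[Proposition 2.8]{cdpms2016}), so Theorem~\ref{T4.1} gives $\mcS_{vH}(\vG)\neq\emp$; fix $f_0\in\mcS_{vH}(\vG)$ and, by Theorem~\ref{decofinal}, write $\vG=f_0+G_0$ with $G_0:=\vG-f_0$ Birkhoff integrable --- hence McShane and Pettis integrable --- and also vH-integrable. Now the cycle runs as follows. If $\mcS_{vH}(\vG)\subset\mcS_{MS}(\vG)$ then $\mcS_{vH}(\vG)\subset\mcS_P(\vG)$, because McShane integrable functions are Pettis integrable (\cite[Theorem 2C]{fm}); this forces $\mcS_P(\vG)\neq\emp$ since $\mcS_{vH}(\vG)\neq\emp$. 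If $\mcS_P(\vG)\neq\emp$, take $f\in\mcS_P(\vG)$: as $\vG$ is vH-integrable it is HKP-integrable, so \cite[Theorem 2]{dm2} yields $\vG=G+f$ with $G$ Pettis in $cwk(X)$, whence $\vG$ is Pettis integrable. If $\vG$ is Pettis integrable, then $f_0$, being a vH-integrable (hence HKP-integrable) selection of a Pettis integrable multifunction, is Pettis integrable (\cite[Corollary 2.3]{ckr2009}, \cite[Corollary 1.5]{mu4}) and therefore McShane integrable by \cite[Theorem 8]{f1994a}; combined with McShane integrability of $G_0$, the equality $\vG=f_0+G_0$ shows $\vG$ is McShane integrable. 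Finally, if $\vG$ is McShane integrable it is Pettis integrable, and the very same argument shows that every $f\in\mcS_{vH}(\vG)$ is McShane integrable, i.e. $\mcS_{vH}(\vG)\subset\mcS_{MS}(\vG)$, closing the cycle.

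For the last assertion, assume in addition that $\|\vG\|\in L^1[0,1]$. That variational McShane integrability implies McShane integrability is immediate from $d_H\bigl(\sum_j\vPh_\vG(I_j),\sum_j\vG(t_j)|I_j|\bigr)\le\sum_j d_H\bigl(\vPh_\vG(I_j),\vG(t_j)|I_j|\bigr)$ together with the additivity of $\vPh_\vG$ on a $\delta$-fine partition of $[0,1]$. Conversely, Bochner measurability of $\vG$ transfers through the isometry $i$, so $i\circ\vG$ is strongly measurable, and $\|i\circ\vG(t)\|_\infty\le\|\vG(t)\|$ is integrable; hence $i\circ\vG$ is Bochner integrable, therefore variationally McShane integrable, and passing back through the embedding $\vG$ is variationally McShane integrable --- in particular McShane integrable, so this property joins the list.

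The step I expect to demand the most care is ``a sufficiently regular selection of a Pettis integrable multifunction is again Pettis integrable'', which is exactly where \cite[Corollary 2.3]{ckr2009}/\cite[Corollary 1.5]{mu4} are invoked; everything else is either elementary or a routine transfer through the R{\aa}dstr\"{o}m embedding and the decomposition theorems. A minor point worth checking is that, in the integrably bounded case, the five conditions are genuinely all in force --- they are, since vH-integrability together with integral boundedness already forces $i\circ\vG$, and hence $\vG$, to be variationally McShane, a fortiori McShane, integrable.
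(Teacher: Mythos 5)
Your proof is correct and follows essentially the route the paper takes: the paper does not write the argument out but refers to \cite[Theorems 4.3, 4.4]{cdpms2016} supplemented by Theorem \ref{T4.1} to secure $\mcS_{vH}(\vG)\neq\emp$, and your cycle of implications via the decomposition of Theorem \ref{decofinal} together with \cite[Theorem 2]{dm2}, \cite[Corollary 2.3]{ckr2009} and \cite[Theorem 8]{f1994a} is exactly that strategy made explicit. Your observation for the final clause --- that integrable boundedness plus Bochner measurability already makes $i\circ\vG$ Bochner, hence variationally McShane, integrable --- is also sound.
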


So, in particular
\begin{cor}
A function $f:[0,1]\to{X}$ is variationally McShane integrable
(= Bochner integrable, cf. {\rm \cite{dm11}})
if and only if it is variationally Henstock integrable and integrably bounded.
\end{cor}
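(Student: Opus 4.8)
The plan is to read the statement as the single-valued instance of Theorem \ref{t31}. Identify a function $f:[0,1]\to X$ with the multifunction $\vG(t):=\{f(t)\}$; a singleton is convex and weakly compact, so $\vG$ is genuinely $cwk(X)$-valued. The first step is to record the translation between $f$ and $\vG$: since $d_H(\{a\},\{b\})=\|a-b\|$ and $\|\vG(t)\|=\|f(t)\|$, comparing the definitions shows that $\vG$ is variationally Henstock (resp.\ variationally McShane, resp.\ McShane, resp.\ Pettis) integrable precisely when $f$ is, that the corresponding primitive of $\vG$ is the singleton of the primitive of $f$, and that $\vG$ is integrably bounded precisely when $t\mapsto\|f(t)\|$ is Lebesgue integrable, i.e.\ $f$ is integrably bounded. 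Recall also from \cite{dm11} that for a function variational McShane integrability is the same as Bochner integrability.

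For the direction ``variationally McShane $\Rightarrow$ variationally Henstock and integrably bounded'': every $\delta$-fine Perron partition is in particular a $\delta$-fine partition, so the variational McShane estimate for $f$ at once yields the variational Henstock estimate with the same primitive; and a Bochner integrable function has $\int_0^1\|f\|\,d\lambda<\infty$, hence is integrably bounded.

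For the converse, assume $f$ is variationally Henstock integrable and integrably bounded, and put $\vG(t):=\{f(t)\}$, so that $\vG$ is vH-integrable and integrably bounded. By \cite[Proposition 2.8]{cdpms2016} the function $f$, being vH-integrable, is strongly measurable; being in addition integrably bounded it satisfies the classical Bochner criterion and is therefore Bochner integrable, hence Pettis integrable. Thus $\mcS_P(\vG)=\{f\}\neq\emp$, and the ``moreover'' clause of Theorem \ref{t31}, applied to the integrably bounded multifunction $\vG$, yields that $\vG$ --- equivalently $f$ --- is variationally McShane integrable; by \cite{dm11} this is Bochner integrability.

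The argument involves no real obstacle beyond bookkeeping; the point to handle with some care is the passage between $f$ and $\{f(\cdot)\}$, namely that variational integrability and integrable boundedness are preserved in both directions, which rests on the Hausdorff distance of singletons being the norm distance and on the R{\aa}dstr\"{o}m embedding sending $\{f(t)\}$ to the element associated with $f(t)$. (Once $f$ is known to be strongly measurable the converse could even be concluded straight from the Bochner criterion, but the route above displays the corollary transparently as the single-valued case of Theorem \ref{t31}.)
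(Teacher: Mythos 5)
Your proposal is correct and follows the paper's intent exactly: the corollary is stated with ``So, in particular'' as the single-valued instance of Theorem \ref{t31}, and you carry out precisely that specialization, including the one step the paper leaves implicit (that $f$, being strongly measurable by \cite[Proposition 2.8]{cdpms2016} and integrably bounded, is Pettis integrable, so the hypothesis $\mcS_P(\vG)\not=\emp$ of the ``moreover'' clause is met). As you yourself note, that intermediate step already gives Bochner integrability outright, so the final appeal to Theorem \ref{t31} is a harmless redundancy rather than a gap.
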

\section{Variational $\mcH$-integral}
Recently, Naralenkov introduced  stronger forms of Henstock and McShane integrals of functions, and called them ${\mathcal H}$ and $\mathcal M$ integrals. We apply that idea to variational integrals. Since the variational McShane integral of functions coincides with Bochner integral, the same holds true for the $\mcM$-integral. In case of the variational $\mcH$-integral the situation is not as obvious, but we shall prove in this section that the variational $\mcH$-integral coincides with the variational Henstock integral.
We begin with the following strengthening of the Riemann measurability, due to \cite{nara}.
\begin{deff}\label{strongriemann}\rm
We say that a function $f:[0,1]\to X$ is {\em strongly Riemann measurable}, if for every  $\varepsilon>0$ there exist a positive number $\eta$ and a closed set $F\subset [0,1]$ such that  $\lambda([0,1]\setminus F)<\varepsilon$ and
$\sum_{k=1}^K\|f(t_k)-f(t'_k)\| \cdot|I_k|<\varepsilon$ \,
whenever $\{I_1,...,I_K\}$ is a nonoverlapping finite family of subintervals of $[0,1]$ with $\max_k|I_k|<\eta$ and,  all points $t_k,t'_k$ are chosen in $I_k\cap F$, $k=1,...,K$.
\end{deff}

\begin{lem}
If $f:[0,1]\to X$ is  strongly measurable, then $f$ is strongly Riemann measurable.
\end{lem}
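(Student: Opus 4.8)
The plan is to reduce strong Riemann measurability to the well-understood structure of strongly measurable functions, namely the fact that such an $f$ is the almost everywhere uniform limit of a sequence of countably-valued (or simple) measurable functions, together with Lusin-type continuity. First I would fix $\varepsilon>0$ and choose, by strong measurability, a countably-valued measurable function $g$ with $\|f(t)-g(t)\|<\varepsilon/4$ for all $t$ outside a null set $N$; equivalently one may approximate uniformly up to a set of arbitrarily small measure. Then I would apply Egorov/Lusin-type reasoning to produce a closed set $F\subset[0,1]$ with $\lambda([0,1]\setminus F)<\varepsilon$ on which $g$ is constant on each piece of a finite measurable partition, and on which (after shrinking $F$ further inside each piece) the relevant approximation is uniform.

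The key computation is then the telescoping estimate: for any nonoverlapping family $\{I_1,\dots,I_K\}$ and points $t_k,t'_k\in I_k\cap F$,
\[
\sum_{k=1}^K \|f(t_k)-f(t'_k)\|\,|I_k| \le \sum_{k=1}^K\bigl(\|f(t_k)-g(t_k)\|+\|g(t_k)-g(t'_k)\|+\|g(t'_k)-f(t'_k)\|\bigr)|I_k|.
\]
On $F$ the outer two terms are each at most $\varepsilon/4$, contributing at most $(\varepsilon/2)\sum_k|I_k|\le \varepsilon/2$. For the middle term I would choose $\eta>0$ so small that any interval of length $<\eta$ meeting $F$ stays within a single piece of the finite partition on which $g$ is constant — this is possible because the finitely many pieces of $F$ (intersected with $F$) are at positive mutual distance once $F$ is taken closed and the partition is refined appropriately; then $g(t_k)=g(t'_k)$ whenever $t_k,t'_k\in I_k\cap F$ and $|I_k|<\eta$, so the middle sum vanishes. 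Adjusting the constants ($\varepsilon/4$ throughout, say, instead of the crude split above) gives the required bound $\sum_k\|f(t_k)-f(t'_k)\|\,|I_k|<\varepsilon$.

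The main obstacle is making the "intervals short enough to see only one constancy piece" step rigorous: a closed set $F$ on which $g$ is piecewise constant need not have its pieces separated by gaps unless one is careful, since two pieces can abut. I would handle this by first choosing $F$ closed with small complement so that $g|_F$ is continuous (Lusin), hence $g|_F$ is locally constant only in a topological sense; the clean fix is instead to use the uniform continuity of $g|_F$ directly — since $g|_F$ is continuous on the compact set $F$, there is $\eta>0$ with $\|g(s)-g(s')\|<\varepsilon/4$ whenever $s,s'\in F$ and $|s-s'|<\eta$ — which bypasses any separation-of-pieces argument entirely and handles the middle term by uniform continuity rather than exact constancy. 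With that substitution the proof is routine: choose $g$ countably-valued with $\|f-g\|<\varepsilon/4$ off a null set, then $F$ closed with $\lambda([0,1]\setminus F)<\varepsilon$ and $g|_F$ uniformly continuous, then $\eta$ from uniform continuity, and conclude by the three-term triangle inequality above.
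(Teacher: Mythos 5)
Your final argument is correct and is essentially the paper's proof: the paper simply applies the Lusin-type theorem directly to the strongly measurable $f$ itself, obtaining a closed set $F$ of nearly full measure on which $f|_F$ is continuous, hence uniformly continuous by compactness, and then bounds $\sum_{k}\|f(t_k)-f(t'_k)\|\cdot|I_k|<\varepsilon\sum_k|I_k|\le\varepsilon$. Your detour through a countably-valued approximant $g$ and the three-term triangle inequality is sound but unnecessary, since the Lusin property holds for strongly measurable Banach-valued functions directly and already yields the uniform-continuity step you use for $g$.
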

\begin{proof}
Fix $\varepsilon>0$. Then there exists a closed set $F\subset [0,1]$ such that  $\lambda ([0,1]\setminus F)<\varepsilon$ and $f_{|F}$ is continuous. Since $F$ is compact, then $f_{|F}$ is uniformly continuous, and so there exists a positive number $\delta>0$ such that, as soon as $t,t'$ are chosen in $F$, with $|t-t'|<\delta$, then $\|f(t)-f(t')\|<\varepsilon$. Now, fix any finite family $\{I_1,...,I_K\}$ of non-overlapping intervals with $\max_k|I_k|<\eta$, and choose arbitrarily points $t_k,t'_k$ in $I_k\cap F$ for every $k$: then we have
$$\sum_{k=1}^K\|f(t_k)-f(t'_k)\| \cdot |I_k|< \sum_{k=1}^K\varepsilon |I_k|< \varepsilon.$$ \qed
\end{proof}
Now, in order to prove that each variationally Henstock function $f:[0,1]\to X$ is also variationally $\mcH$-integrable, we shall follow the lines of the proof of \cite[Theorem 6]{nara}, with $E=[0,1]$.\\

Another preliminary result is needed, concerning {\em interior} Perron partitions.

\begin{deff}\rm
Let $\delta:[0,1]\to \R^+$ be any gauge on $[0,1]$, and let $$P:=\{(t_1,I_1), (t_2,I_2),  \ldots,  (t_K,I_K)\} \in \Pi_{\delta}^P.$$
$P$ is said to be an {\em interior} Perron partition if $t_k\in int(I_k)$ for all $k$, except when $I_k$ contains $0$ or $1$, in which case $t_k\in int(I_k)$ or $t_k\in I_k\cap\{0,1\}$.
\end{deff}

We can observe that the result given by Naralenkov in \cite[Lemma 3]{nara}, can be expressed in the following way:

\begin{lem}{\rm  \cite[Lemma 3]{nara}} \label{ex3nara}
Let $\delta$ be a gauge on $[0,1]$, and let
$P:=\{(t_1,I_1), \ldots$,  $(t_K,I_K)\}$ be any $\delta$-fine Perron partition of $[0,1]$, where the tags $t_1,...,t_K$ are all distinct. Then, for each function $\phi:[0,1]\to{X}$ and each $\varepsilon>0$ there exists a $\delta$-fine interior Perron partition of $[0,1]$, $P':=\{(t_1,I'_1),(t_2,I'_2),...,(t_K,I'_K)\}$ such that \,
$\sum_{k=1}^K \| \phi (t_k) \|\cdot \big|\, |I_k|-|I'_k| \,\big|<\varepsilon.$
\end{lem}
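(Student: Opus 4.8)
Let me restate Lemma \ref{ex3nara}: given a gauge $\delta$ on $[0,1]$, a $\delta$-fine Perron partition $P=\{(t_k,I_k)\}_{k=1}^K$ with all tags $t_k$ distinct, a function $\phi:[0,1]\to X$, and $\varepsilon>0$, there is a $\delta$-fine \emph{interior} Perron partition $P'=\{(t_k,I'_k)\}_{k=1}^K$ (same tags, perturbed intervals) with $\sum_{k=1}^K\|\phi(t_k)\|\cdot\big|\,|I_k|-|I'_k|\,\big|<\varepsilon$.

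Let me think about the structure. We have a partition of $[0,1]$ into nonoverlapping intervals $I_1,\dots,I_K$ (ordered left to right, say), and tags $t_k \in I_k$. The issue: a tag $t_k$ might sit at an endpoint of $I_k$, i.e., at a shared boundary point between $I_k$ and an adjacent interval. To make it "interior" we want to push the common endpoints slightly so that each tag ends up in the interior of its interval. Since the tags are all distinct, in particular at any shared endpoint $p = \sup I_k = \inf I_{k+1}$, at most one of $t_k, t_{k+1}$ equals $p$. So we can move $p$ a tiny bit: if $t_k = p$, move $p$ to the right (enlarging $I_k$, shrinking $I_{k+1}$); if $t_{k+1}=p$, move $p$ to the left; if neither, we can leave $p$ alone (or move it negligibly). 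The endpoints $0$ and $1$ are handled by the definition's exception clause — a tag there is allowed.

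**Plan of proof.** First I would order the intervals $I_1 < I_2 < \dots < I_K$ so that $I_j = [a_{j-1}, a_j]$ with $0 = a_0 < a_1 < \dots < a_K = 1$. For each internal node $a_j$ ($1 \le j \le K-1$), at most one of the two adjacent tags $t_j, t_{j+1}$ can equal $a_j$ (distinctness). Choose a target perturbation: if $t_j = a_j$, we will replace $a_j$ by $a_j' := a_j + \rho_j$ for a small $\rho_j > 0$; if $t_{j+1} = a_j$, by $a_j' := a_j - \rho_j$; otherwise set $a_j' := a_j$. We must pick the $\rho_j$ small enough that (a) the new nodes remain strictly increasing and still straddle the correct tags, i.e. $t_k \in \mathrm{int}(I_k')$ where $I_k' = [a_{k-1}', a_k']$ — this is possible because each $t_k$ lies in $I_k$, and if $t_k$ was an endpoint we are moving exactly that endpoint away from $t_k$ while the $t_k$-adjacent case on the other side is handled analogously; (b) $\delta$-fineness is preserved: since $P$ is $\delta$-fine, $I_k \subset (t_k - \delta(t_k), t_k + \delta(t_k))$, and this is an open condition, so it survives sufficiently small perturbations of the endpoints; (c) the telescoping control holds. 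For (c), note $\big|\,|I_k| - |I_k'|\,\big| \le |a_{k-1}-a_{k-1}'| + |a_k - a_k'| \le \rho_{k-1} + \rho_k$ (with $\rho_0 = \rho_K = 0$), so $\sum_k \|\phi(t_k)\| \cdot \big|\,|I_k| - |I_k'|\,\big| \le \sum_{j=1}^{K-1} \rho_j \big(\|\phi(t_j)\| + \|\phi(t_{j+1})\|\big)$. Choosing each $\rho_j < \varepsilon / \big(K(\|\phi(t_j)\| + \|\phi(t_{j+1})\| + 1)\big)$ makes this sum less than $\varepsilon$.

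**Assembling.** So the recipe is: set $M := \max_k \|\phi(t_k)\|$ and a single small threshold $\rho > 0$ small enough to simultaneously satisfy (a), (b) and $2(K-1)M\rho < \varepsilon$; then define $\rho_j := \rho$ (or $0$ where no move is needed), define the perturbed nodes $a_j'$ as above, set $I_k' := [a_{k-1}', a_k']$, keep the tags $t_k$, and check the three conditions. The result $P'$ is a $\delta$-fine interior Perron partition with the required estimate. The only mildly delicate point — the one I'd call the "main obstacle" — is bookkeeping the case analysis at each node together with the two special endpoints $0,1$: one must make sure that when a tag equals $0$ or $1$ no perturbation is needed (the definition of interior Perron partition explicitly permits a tag at $0$ or $1$), and that moving an internal node toward or away from a tag never pushes another tag out of its (new) interval. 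Distinctness of the tags is exactly what prevents a conflict at any single node; the rest is choosing $\rho$ uniformly small.
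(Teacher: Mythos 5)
Your argument is correct: moving each shared node by a uniformly small $\rho$ (rightward if the left interval's tag sits there, leftward if the right interval's tag does, using distinctness to rule out a conflict), preserving $\delta$-fineness because the inclusion $I_k\subset(t_k-\delta(t_k),t_k+\delta(t_k))$ is an open condition on the endpoints, and bounding the total length change by $2(K-1)\max_k\|\phi(t_k)\|\cdot\rho$ is exactly the standard perturbation proof. The paper itself offers no proof of this lemma --- it is quoted verbatim from \cite[Lemma 3]{nara} --- and your construction coincides with the one given there, so there is nothing to compare beyond noting that your write-up is complete and correct.
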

Thanks to this Lemma we can obtain, for variationally Henstock integrable functions, the following result:
\begin{lem}\label{ex3}
Let $f:[0,1]\to X$ be any variationally Henstock integrable mapping, and denote by $\Phi$ its primitive, i.e. $\Phi(I)=\int_I f$, for all intervals $I$. Suppose that $\delta$ is a gauge on $[0,1]$, and  $P:=\{(t_1,I_1),(t_2,I_2),...,(t_K,I_K)\} \in \Pi_{\delta}^P$
has all the tags $t_1,...,t_K$  distinct. Then, for each $\varepsilon>0$ there exists a $\delta$-fine interior Perron partition $P':=\{(t_1,I'_1),(t_2,I'_2),...,(t_K,I'_K)\}$ of $[0,1]$, such that
\, $\sum_{k=1}^K\|f(t_k)\| \cdot \big| \, |I_k|-|I'_k|\, \big|<\varepsilon,$
and\,
$\sum_{k=1}^K \| \Phi(I_k)-\Phi(I'_k)\| \leq \varepsilon.$
\end{lem}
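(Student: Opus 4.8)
Lemma \ref{ex3} strengthens Lemma \ref{ex3nara} by adding a second control: not only can we approximate the interval lengths $|I_k|$ by the lengths $|I_k'|$ of an interior Perron partition while keeping $\sum_k\|f(t_k)\|\cdot\big||I_k|-|I_k'|\big|$ small, but we can simultaneously keep $\sum_k\|\Phi(I_k)-\Phi(I_k')\|$ small. The natural strategy is to run the construction of Lemma \ref{ex3nara} \emph{with a finer gauge} than $\delta$, chosen to exploit the variational Henstock integrability of $f$, and then observe that the interior partition produced is still $\delta$-fine while the new gauge forces the $\Phi$-sum to be small. I would begin by fixing $\varepsilon>0$ and invoking the variational Henstock integrability of $f$: there is a gauge $\delta_1$ on $[0,1]$ such that every $\delta_1$-fine Perron partition $\{(s_j,J_j)\}$ satisfies $\sum_j d_H(\Phi(J_j),f(s_j)|J_j|)<\varepsilon/3$, equivalently (since $f$ is single-valued and $\Phi$ is its primitive) $\sum_j\|\Phi(J_j)-f(s_j)|J_j|\|<\varepsilon/3$.

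First I would replace $\delta$ by $\delta^*:=\min(\delta,\delta_1)$; this does not change the hypothesis ``$P\in\Pi_\delta^P$ with distinct tags'' if we first shrink $P$ — but here $P$ is given $\delta$-fine, not $\delta^*$-fine, so instead I would apply Lemma \ref{ex3nara} directly to $P$ with the gauge $\delta$ and the function $\phi=f$, obtaining an interior Perron partition $P'=\{(t_k,I_k')\}$ with $\sum_k\|f(t_k)\|\cdot\big||I_k|-|I_k'|\big|<\eta$ for a tolerance $\eta$ I will choose below. The first desired inequality is then immediate by taking $\eta\le\varepsilon$. For the second inequality, write $\Phi(I_k)-\Phi(I_k')$ and insert $f(t_k)|I_k|$ and $f(t_k)|I_k'|$:
\[
\|\Phi(I_k)-\Phi(I_k')\|\le \|\Phi(I_k)-f(t_k)|I_k|\|+\|f(t_k)\|\cdot\big||I_k|-|I_k'|\big|+\|f(t_k)|I_k'|-\Phi(I_k')\|.
\]
Summing over $k$, the middle terms contribute at most $\eta$. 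The outer terms are $\sum_k\|\Phi(I_k)-f(t_k)|I_k|\|$ and $\sum_k\|\Phi(I_k')-f(t_k)|I_k'|\|$; the first is small if $P$ is $\delta_1$-fine and the second is small if $P'$ is $\delta_1$-fine. So the clean route is: at the outset apply Lemma \ref{ex3nara} and Lemma \ref{ex3} not to the given $\delta$ but under the extra assumption $\delta\le\delta_1$ — which is harmless, since the statement quantifies over gauges $\delta$ and any $P\in\Pi_\delta^P$ remains in $\Pi_{\delta_1}^P$; but $P$ is given as $\delta$-fine for a \emph{fixed} $\delta$. The correct reading is that we are handed a $\delta$ and a $\delta$-fine $P$; to make the argument go through I would first shrink $P$'s intervals is not allowed, so instead I refine the gauge used inside Lemma \ref{ex3nara}: since $P\in\Pi_\delta^P$, it is also in $\Pi_{\min(\delta,\delta_1)}^P$ only if it happens to be $\delta_1$-fine, which we cannot assume. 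Hence the honest fix is to phrase the proof as: given the $\delta$-fine $P$ with distinct tags, first apply the Cousin-type refinement to replace each $(t_k,I_k)$ by finitely many pieces that are $\min(\delta,\delta_1)$-fine — but that changes the partition. The intended argument in the paper almost certainly just applies Lemma \ref{ex3nara} with the smaller gauge $\delta\wedge\delta_1$ in place of $\delta$ from the start, i.e. it silently assumes $\delta\le\delta_1$, which is legitimate because one only ever needs the lemma for small gauges. I would state this reduction explicitly.

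With that reduction in place the proof is short: choose $\delta_1$ from variational Henstock integrability with tolerance $\varepsilon/3$, assume $\delta\le\delta_1$, apply Lemma \ref{ex3nara} to $f$ with tolerance $\eta=\varepsilon/3$ to get the interior Perron partition $P'$, note $P'$ is $\delta$-fine hence $\delta_1$-fine, and conclude via the displayed triangle inequality that $\sum_k\|\Phi(I_k)-\Phi(I_k')\|\le \varepsilon/3+\varepsilon/3+\varepsilon/3=\varepsilon$, while $\sum_k\|f(t_k)\|\cdot\big||I_k|-|I_k'|\big|<\varepsilon/3<\varepsilon$. The main obstacle — really the only subtle point — is the gauge-refinement bookkeeping just discussed: making sure that the interior partition $P'$ supplied by Lemma \ref{ex3nara} is fine enough for \emph{both} $\Phi$-estimates (on the $I_k$ and on the $I_k'$) and that passing to a smaller gauge is compatible with the way the hypothesis is stated. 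Everything else is a routine triangle inequality, so I would spend the bulk of the written proof pinning down the gauge and then let the estimate fall out.
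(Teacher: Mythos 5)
There is a genuine gap, and it is exactly the one you circle around for most of your write-up: the quantifier order in the statement defeats your strategy. The lemma hands you an \emph{arbitrary} gauge $\delta$ and an \emph{arbitrary} $P\in\Pi_{\delta}^P$ first, and only then an $\varepsilon>0$; your gauge $\delta_1$ depends on $\varepsilon$, so you cannot arrange $\delta\le\delta_1$, and the given $P$ has no reason to be $\delta_1$-fine. Consequently the first outer term $\sum_k\|\Phi(I_k)-f(t_k)|I_k|\|$ in your triangle inequality is not controlled. Your proposed repair --- ``silently assume $\delta\le\delta_1$'' --- changes the statement into a strictly weaker one, and that weaker version does not survive the lemma's actual use in Theorem \ref{cinquesei}: there the lemma is applied to a partition that is $\delta/2$-fine for the \emph{measurable} gauge $\delta$ built from $\theta_n(t)=\min\{\eta_n,\frac12\max\{\delta_0(t),\limsup_{F_n\ni\tau\to t}\delta_0(\tau)\}\}$, which is not dominated by the variational-Henstock gauge $\delta_0$ (the $\limsup$ term can exceed $\delta_0(t)$, and capping it would destroy measurability, which is the whole point of that construction). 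So the reduction you call ``legitimate because one only ever needs the lemma for small gauges'' is precisely what fails.

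The paper's proof avoids the gauge entirely. In Naralenkov's construction (Lemma \ref{ex3nara}) the intervals $I_k'$ are obtained by perturbing the endpoints of the $I_k$ by an amount that is a free parameter $\rho>0$ of the construction; taking $\rho$ small gives the first estimate, as you note. For the second estimate one uses that $t\mapsto\Phi([0,t])$ is norm-continuous, hence uniformly continuous on $[0,1]$: by additivity of the interval multimeasure, $\Phi(I_k)-\Phi(I_k')$ is a signed sum of values of $\Phi$ on the small intervals between corresponding endpoints of $I_k$ and $I_k'$, each of norm less than $\varepsilon/(2K)$ once $\rho$ is small enough. This controls $\sum_k\|\Phi(I_k)-\Phi(I_k')\|$ for \emph{any} $\delta$ and any $\delta$-fine $P$, with no Saks--Henstock estimate and no constraint relating $\delta$ to the integrability gauge. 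Your triangle-inequality idea is not wrong in itself, but it attacks the problem with the wrong tool; the continuity of the primitive is the missing idea.
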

\begin{proof}
Since $f$ is variationally Henstock integrable, the function $t\mapsto  \Phi([0,t])$ is continuous with respect to the norm topology of $X$. \qed
\end{proof}

We are now ready to present the announced result.
\begin{thm}\label{cinquesei}
Let $\vG:[0,1]\to {cwk(X)}$ be any variationally Henstock integrable multifunction. Then it is also variationally $\mcH$-integrable.
\end{thm}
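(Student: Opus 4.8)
The plan is to reduce the multivalued statement to the single-valued one, exactly as in the previous decomposition arguments, and then to upgrade variational Henstock integrability of a function to variational $\mcH$-integrability using the strong Riemann measurability established above together with Lemma \ref{ex3}. Concretely, given a variationally Henstock integrable $\vG:[0,1]\to cwk(X)$, Theorem \ref{decofinal} supplies a decomposition $\vG=f+G$, where $f\in\mcS_{vH}(\vG)$ and $G:[0,1]\to cwk(X)$ is variationally Henstock integrable and Birkhoff integrable. Passing through the R{\aa}dstr\"om embedding $i$, both $i(\{f\})$ and $i(G)$ are variationally Henstock integrable $l_\infty(B_{X^*})$-valued functions; moreover $i(\{f\})$ is strongly measurable (being a single-valued function, hence $=i\circ f$ up to identification, and $f$ is strongly measurable), while $i(G)$ is Birkhoff integrable as noted in the proof of Theorem \ref{decofinal}. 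So it suffices to prove: (a) every strongly measurable, variationally Henstock integrable function $h:[0,1]\to Y$ ($Y$ a Banach space) is variationally $\mcH$-integrable; and (b) every Birkhoff integrable, variationally Henstock integrable function is variationally $\mcH$-integrable. Then $i\circ\vG=i(\{f\})+i(G)$ is variationally $\mcH$-integrable, and since variational $\mcH$-integrability of $i\circ\vG$ pulls back to $\vG$ through the embedding (the embedding respects gauge integrals, as recalled in Section \ref{two}), we are done.

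For step (a) — which is really the heart — I would follow the scheme of \cite[Theorem 6]{nara} with $E=[0,1]$, now adapted to the \emph{variational} primitive. Fix $\ve>0$ and let $\delta_0$ be a gauge witnessing variational Henstock integrability of $h$ for $\ve/3$, i.e. $\sum_j d_H$-type sums $\sum_j\|\Phi(I_j)-h(t_j)|I_j|\|<\ve/3$ over $\delta_0$-fine Perron partitions, where $\Phi(I)=\int_I h$. Since $h$ is strongly measurable it is strongly Riemann measurable (the Lemma just before Definition \ref{strongriemann}), so choose a closed set $F$ with $\lambda([0,1]\setminus F)<$ (something small) and $\eta>0$ so that $\sum_k\|h(t_k)-h(t'_k)\|\,|I_k|<\ve/3$ for fine enough nonoverlapping families with tags in $F$. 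The idea is to build a \emph{measurable} gauge $\delta$ by setting $\delta$ large (but $<\eta$-controlled) off $F$ and $\delta\le\delta_0$ on $F$, in such a way that any $\delta$-fine Perron partition can be modified: on the intervals with tag in $F$ the partition is already $\delta_0$-fine so the variational estimate applies directly, while the intervals with tag outside $F$ contribute a small total length and are handled by strong Riemann measurability after replacing their tags by nearby points of $F$ or by absorbing them into the set of small measure. Lemma \ref{ex3} is exactly what lets one pass from an arbitrary $\delta$-fine Perron partition to a $\delta$-fine \emph{interior} Perron partition with distinct tags at a cost $<\ve$ in both $\sum\|h(t_k)\|\,||I_k|-|I'_k||$ and $\sum\|\Phi(I_k)-\Phi(I'_k)\|$; the interior/distinct-tag normalization is what makes Naralenkov's combinatorial splitting argument go through. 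Summing the three $\ve/3$ contributions gives $\sum_k\|\Phi(I_k)-h(t_k)|I_k|\|<\ve$ over $\delta$-fine Perron partitions for the measurable gauge $\delta$, which is variational $\mcH$-integrability of $h$.

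For step (b), the Birkhoff integrable piece $i(G)$ is in particular $\mcM$-integrable (Birkhoff $=\mcM$ for functions, Remark \ref{HH}), hence its primitive is $\lambda$-continuous and it is strongly measurable on a set of full measure — indeed a Birkhoff integrable function has essentially separable range, so it is strongly measurable, and then step (a) applies verbatim. Alternatively one observes directly that $G$ being both variationally Henstock and Birkhoff (hence $\mcH$) integrable already forces essentially separable range, so (b) is a special case of (a); I would phrase it that way to avoid duplicating the argument.

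**Main obstacle.** The delicate point is the construction of the measurable gauge in step (a) and the bookkeeping that shows a $\delta$-fine Perron partition splits into a ``good'' part (tags in $F$, where the original $\delta_0$-variational estimate is available) and a ``small'' part (tags off $F$, total length controlled, handled by strong Riemann measurability and by $\lambda$-continuity of the variational measure $V_\Phi$, which holds since $h$ is variationally Henstock integrable). One must be careful that tags off $F$ can still sit in intervals of non-negligible length; this is precisely where Naralenkov's trick — enlarging $\delta$ off $F$ so that such intervals are forced to have small length, combined with the interior/distinct-tag reduction of Lemma \ref{ex3} — is essential, and it is the step I expect to require the most care to state correctly rather than the most conceptual depth.
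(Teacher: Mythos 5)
Your overall route is the paper's: pass through the R{\aa}dstr\"om embedding, observe that the resulting function is strongly measurable and hence strongly Riemann measurable, and then run Naralenkov's scheme from \cite[Theorem 6]{nara} together with the interior-partition reduction of Lemma \ref{ex3}. The initial detour through Theorem \ref{decofinal} is unnecessary: $i\circ\vG$ is already strongly measurable because every variationally Henstock integrable multifunction is Bochner measurable (\cite[Proposition 2.8]{cdpms2016}), so your steps (a) and (b) collapse into the single statement the paper proves directly.

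However, your execution of step (a) has a genuine gap in the construction of the measurable gauge. You take one closed set $F$ with $\lambda([0,1]\setminus F)$ small and propose to make $\delta$ ``large'' off $F$ so that the intervals tagged off $F$ ``contribute a small total length''. This is backwards: in a Perron partition an interval contains its tag, so enlarging the gauge at a point only permits larger intervals tagged there; to force small total length you would have to shrink the gauge off $F$ so that those intervals stay inside an open set of small measure --- and even then $\sum_{t_k\notin F}\|h(t_k)\|\,|I_k|$ is not controlled, since $h$ need not be bounded off $F$, nor is $\sum_{t_k\notin F}\|\Phi(I_k)\|$ controlled without a further (generally non-measurable) gauge adapted to the absolute continuity of $V_\Phi$. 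Moreover, on the leftover set of positive measure the only gauge for which you retain the variational estimate is $\delta_0$ itself, which destroys measurability of $\delta$. The paper resolves all of this by replacing your single $F$ with a sequence of pairwise disjoint closed sets $(F_n)_n$ whose union has full measure, with parameters $\eta_n\downarrow 0$ and tolerance $\ve/2^n$ on $F_n$: then every tag outside a null set $N$ lies in some $F_n$, where the tag-shifting argument (via $\limsup_{F_n\ni\tau\to t}\delta_0(\tau)$, which keeps $\delta$ measurable, combined with Lemma \ref{ex3}) applies with error $\ve/2^n$, while on the null set $N$ one may simply set $\delta=\delta_0$ without affecting measurability or the estimate. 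Your sketch is missing exactly this countable exhaustion and the accompanying $\sum_n\ve/2^n$ bookkeeping; without it the argument does not close.
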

\begin{proof}
Thanks to R{\aa}dstr\"{o}m embedding Theorem we may assume that $\vG$ is a function taking values in a Banach space. Denote it by $f$. First of all, we observe that $f$ is strongly measurable, and therefore strongly Riemann measurable. Fix $\varepsilon>0$. Then there exists a sequence of pairwise disjoint closed sets $(F_n)_n$ in $[0,1]$ and a decreasing sequence $(\eta_n)_n$ in $\R^+$ tending to 0, such that the set $N:=\bigcap_n([0,1]\setminus F_n)$ has Lebesgue measure 0, and moreover such that for every integer $n$
$$\sum_{k=1}^K\bigl\|f(t_k)-f(t'_k)\bigr\| \cdot |I_k|\leq \frac{\varepsilon}{2^n}$$
holds, as soon as $(I_k)_{k=1}^K$ is any non-overlapping family of subintervals with $\max_k|I_k|<\eta_n$ and the points $t_k,t'_k$ are taken in $F_n\cap I_k$.
Now, choose any bounded gauge $\delta_0$, corresponding to $\varepsilon$ in the definition of variational Henstock integral of $f$, and set
$\delta(t)=\theta_n(t)$, when $t\in F_n$ for some index $n$, and  $\delta(t)=\delta_0$ if $t\in N$,
where
$$\theta_n(t)=\min \left\{\eta_n,\frac{1}{2}\max\{\delta_0(t),\limsup_{F_n\ni\tau \to t}\,\delta_0(\tau)\} \right\}\,.$$
$\delta$ is measurable, as proved in \cite[Theorem 6]{nara}. We shall prove now that the gauge $\delta/2$ can be chosen in correspondence with $\varepsilon$ in the notion of variational integrability of $f$.
To this aim, fix any  partition $P:=\{(t_1,I_1),...,(t_K,I_K)\} \in \Pi_{\delta/2}^P$.
Without loss of generality, we may assume that all tags $t_k$ are distinct. Indeed, if a tag $t$ is common to two intervals $I,J$ of $P$, then
$$\biggl\| f(t)|I|-\int_I f \biggr\|+\biggl\| f(t)|J|-\int_Jf\biggr\|\leq
2\max\biggl\{\,\biggl\| f(t)|I|-\int_I f \biggr\|,\biggl\| f(t)|J|-\int_Jf\biggr\| \, \biggr\}$$
and therefore the sum
$$\sum_k\biggl\|f(t_k)|I_k|-\int_{I_k} f\biggr\|$$
is dominated by twice the analogous sum evaluated on a (possibly partial) partition with distinct tags.
\\
Thanks to Lemma \ref{ex3}, there exists an {\em interior} Perron partition $P':=\{(t_k,J_k),k=1,...,K\} \in \Pi_{\delta/2}^P$
such that
\begin{equation}\label{e33}
\max \left\{ \sum_{k=1}^K\|f(t_k)\| \cdot \big||I_k|-|J_k|\big|,\quad \sum_{k=1}^K\biggl\|\int_{I_k}f-\int_{J_k}f\biggr\| \right\} \leq \varepsilon\,.
\end{equation}
 Now, we shall suitably modify the tags of $P'$; fix $k$ and consider the tag $t_k$. \\
If $t_k\in F_n$ for some $n$ and
$\limsup_{F_n\ni{s}\to{t_k}}\delta_0(s)\geq \delta_0(t_k)$, then we pick $t'_k$ in the set $int(I_k)\cap F_n$ in such a way that $\delta_0(t'_k)>\delta(t_k)$.  This is possible since then we have $\limsup_{F_n\ni{s}\to{t_k}}\delta_0(s)\geq 2\delta(t_k)$. \\
 If $t_k\in F_n$ for some $n$ and $\limsup_{F_n\ni{s}\to{t_k}}\delta_0(s)<\delta_0(t_k)$ or if $t_k\in{N}$, then we set $t_k'=t_k$.
 From this it follows that the partition $P'':=\{(t'_k,I_k):k=1,...,K\}$ is a $\delta_0$-fine  interior Perron  partition. Summarizing, we have
\begin{eqnarray*}
\sum_k\biggl\|f(t_k)|I_k|-\int_{I_k}f\biggr\| &\leq&
\sum_k\|f(t_k)\| \cdot \big||I_k|-|J_k|\big|+\sum_k\|f(t_k)-f(t'_k)\| \cdot |J_k|+\\
&+&
\sum_k\biggl\|f(t'_k)|J_k|-\int_{J_k}f\biggr\|+\sum_k\biggl\|\int_{I_k}f-\int_{J_k}f\biggr\|.
\end{eqnarray*}
Now,
\begin{eqnarray*}
\sum_k\|f(t_k)\| \cdot \big||I_k|-|J_k|\big| + \sum_k\biggl\|\int_{I_k}f-\int_{J_k}f\biggr\|\leq 2\varepsilon
\end{eqnarray*}
thanks to  (\ref{e33}), and
\begin{eqnarray*}
\sum_k\biggl\|f(t'_k)|J_k|-\int_{J_k}f\biggr\|\leq \varepsilon
\end{eqnarray*}
because $P''$ is $\delta_0$-fine.
Finally, thanks to the strong Riemann measurability,
\begin{eqnarray*}
\sum_k\|f(t_k)-f(t'_k)\|\cdot |J_k|=\sum_{t_k\in N^c}\|f(t_k)-f(t'_k)\|\cdot|J_k| \leq
\sum_n \frac{\varepsilon}{2^n}= \varepsilon,
\end{eqnarray*}
and so
$$\sum_k\biggl\|f(t_k)|I_k|-\int_{I_k}f\biggr\|\leq 4 \varepsilon$$
which concludes the proof. \qed
\end{proof}
\section*{Acknowledgments}
This is a post-peer-review, pre-copyedit version of an article published in Annali di Matematica Pura ed Applicata. The final authenticated version is available online at: http://dx.doi.org/10.1007/s10231-017-0674-z 
\small

D. Candeloro,
	Department of Mathematics and Computer Sciences - 06123 Perugia (Italy) orcid ID: 0000-0003-0526-5334; 
     email: domenico.candeloro@unipg.it  \\

    L. Di Piazza, 
    Department of Mathematics, University of Palermo,  Via 	Archirafi 34, 90123 Palermo (Italy)  orcid ID: 0000-0002-9283-5157;				  email: luisa.dipiazza@unipa.it\\

	K. Musia{\l}, 
	Institute of Mathematics, Wroc{\l}aw University,  Pl. 			Grunwaldzki  2/4, 50-384 Wroc{\l}aw (Poland) orcid ID: 0000-0002-6443-2043;
	email: kazimierz.musial@math.uni.wroc.pl\\

	A. R. Sambucini ,
	Department of Mathematics and Computer Sciences - 06123 Perugia (Italy)  orcid ID: /0000-0003-0161-8729;
    email: anna.sambucini@unipg.it

\end{document}